%% file: main-Sep-30.tex
\documentclass[11pt,reqno]{amsart}

\usepackage{fullpage}

\usepackage{graphicx}
\usepackage{tikz-cd}
\usepackage{float}
\usepackage{subcaption}

\usepackage[colorlinks=true, pdfstartview=FitH, linkcolor=blue, citecolor=blue, urlcolor=blue]{hyperref}

\usepackage{amssymb,amsmath,amsthm, url, comment}

\usepackage{enumitem}

\begin{document}
\newtheorem{thm}{Theorem}[section]
\newtheorem{prop}[thm]{Proposition}
\newtheorem{lem}[thm]{Lemma}

\newtheorem{cor}[thm]{Corollary}

\theoremstyle{definition}
\newtheorem{defn}[thm]{Definition}
\newtheorem{notations}[thm]{Notation}
\newtheorem{remark}[thm]{Remark}
\newtheorem{remarks}[thm]{Remarks}
\newtheorem{example}[thm]{Example}
\newtheorem{question}[thm]{Question}
\newtheorem{reduction}[thm]{Reduction}
\newtheorem{conjecture}[thm]{Conjecture}
\newtheorem*{ack}{Acknowledgment}

\newtheoremstyle{claimstyle}
{}{} 
{\itshape} 
{} 
{\itshape} 
{.} 
{ } 
{}

\theoremstyle{claimstyle}
\newtheorem{claim}[thm]{Claim}
\newtheorem*{claim-star}{Claim}

\renewcommand{\labelenumi}{(\alph{enumi})}

\newcommand{\C}{{\mathbb C}}
\newcommand{\Q}{{\mathbb Q}}
\newcommand{\N}{{\mathbb N}}
\newcommand{\R}{{\mathbb R}}
\newcommand{\Z}{{\mathbb Z}}
\newcommand{\rk}{\mathrm{rk}}
\renewcommand{\P}{\mathbb{P}}
\newcommand{\A}{\mathbb{A}}
\renewcommand{\bar}{\overline}
\renewcommand{\tilde}{\widetilde}
\newcommand{\Gal}{\mathrm{Gal}}
\newcommand{\bQ}{\bar{\Q}}
\newcommand{\eps}{\varepsilon}

\title{A formula for the rank over $\mathbb{Q}(t)$ of the elliptic curve $y^2=x^3+At^6+Bt^3+C$}

\author{Zhengheng Bao}
\address{40 St. George Street, Toronto ON, Canada, M5S 2E4}
\email{zhengheng.bao@mail.utoronto.ca}

\begin{abstract}

In this paper, we give an explicit formula for the rank and generators (up to finite index) over $\mathbb{Q}(t)$ of all non-trivial elliptic curves of the form $y^2=x^3+At^6+Bt^3+C$, which is a larger class of elliptic surfaces than the one in \cite{Kl} and \cite{DN}, namely $y^2=x^3+At^6+C$. Our proof provides a new method to find this formula using generators of the geometric Mordell-Weil group while this geometric Mordell-Weil group (which is a $\mathbb{Z}[\omega]$-module) no longer decomposes into rank-one submodules by the methods in \cite{Kl} and \cite{DN}. Moreover, our proof uses only a few elements in the Galois group of the coordinates of the generators and only light computations by hand.

\end{abstract}

\maketitle

\section{Introduction}

Let $f(t) \in \Q[t]$ and $E:y^2=x^3+f(t)$ be an elliptic curve. For a field $\Q \subseteq k \subseteq \bQ$, elliptic curves over $k(t)$ correspond to elliptic surfaces over $k$ (\cite{SS} chapter 5). We use these terms interchangeably. 

All elliptic surfaces in this paper are nontrivial, meaning that $f(t)$ does not equal $ct^{6m}$ for some $c \in \C$ and $m \in \Z$. 
Some research on the rank and generators of the geometric Mordell-Weil group $E(\bQ(t))$ and of the arithmetic Mordell-Weil group $E(\Q(t))$ for some specific $f(t)$ has been done so far. 

On the geometric Mordell-Weil group, the rank for $1 \leq \deg(f) \leq 6$ was given by Shioda-Tate's formula (\cite{SS} Section 7.2) early in the history. The highest rank known so far is $68$, given by Shioda \cite{Shi92} in 1992 with a proof that the rank for $f(t)=t^m+1$ is at most $68$ with equality if and only if $360 \mid m$.
A method to write $E(\bQ(t))$ as a direct sum of Mordell-Weil groups of lower-rank surfaces (up to finite index) was given by Chahal, Meijer, and Top \cite{CMT} in 2000 and is used in our proof. 
The rank for $f(t)=t^m+1$ for every $m \geq 1$ was given by Usui \cite{Usu} in 2008. Explicit generators for $f(t)=t^m+1$ for each $1 \leq m \leq 12$ were given by Salami and Zargar \cite{SZ} in 2025. Explicit generators for $f(t)=t^{360}+1$ were given by Salami \cite{Sal} in 2025.

On the explicit formulas for the arithmetic Mordell-Weil group, 
a result on the rank and generators for $1 \leq \deg(f) \leq 3$ was given by Bremner \cite{Br} in 1991. 
A result on the rank of elliptic surfaces in the general form but still with low-degree coefficients was given by Battistoni, Bettin, and Delaunay \cite{BBD} in 2021 for elliptic surfaces of the form $y^2=x^3+\alpha_2(t)x^2+\alpha_4(t)x+\alpha_6(t)$, where $\deg(\alpha_2), \deg(\alpha_4) \leq 2$, $\deg(\alpha_6) \leq 3$, $\alpha_6$ is monic, and one of $\alpha_2,\alpha_4$ is nonzero. 
A result on the rank and generators for higher-degree $f(t)$ but of a specific form $f(t)=At^6+B$ was first given by Desjardins and Naskrecki \cite{DN} in 2024. 
A different and shorter proof of \cite{DN} was then given by Kloosterman \cite{Kl} in 2026, which inspired the proof of this paper.

On the algorithmic aspects for the arithmetic Mordell-Weil group, Butler and Elsenhans \cite{BE} in 2026 found an algorithm which, for any given rational elliptic surface, computes the Galois representation (where $\Gal(\bQ/\Q)$ acts on the coordinates) of the geometric Mordell-Weil group, which automatically implies the arithmetic rank. 
However, their algorithm does not provide a closed-form formula in terms of the coefficients, like the $A,B,C$ in our paper. 
It computes the Galois group, chooses a good prime $p$ that depends on the Galois group, uses mod $p$ reduction for this $p$, and uses $l$-adic techniques.
Moreover, their algorithm needs to 
apply all elements in the Galois group to the generating set of the geometric Mordell-Weil group, 
and then compute the height pairing matrix after each Galois action is applied. 
This is sometimes time-consuming even on the computer because of the large size of the Galois group and the complexity of each computation.

In this paper, we give an explicit formula for the arithmetic rank and generators (up to finite index) for $y^2=x^3+At^6+Bt^3+C$, which is a larger class of elliptic surfaces than the one in \cite{Kl} and \cite{DN}. 
Our proof provides a new method to find this formula using the geometric generators while this geometric Mordell-Weil group does not decompose into rank-one submodules by the methods in \cite{Kl} and \cite{DN}. Moreover, our proof uses only a few elements in the Galois group of the generators and only light computations by hand. Additionally, our formula also leads to a non-trivial upper bound on the arithmetic rank for these elliptic surfaces.

Now we fix some notations for elliptic surfaces that we are going to use frequently. 

\begin{notations} \label{notations-E}
    Let $A,B,C \in \Q$, and $(A,B),(B,C) \neq (0,0)$. 
    We denote 
    $$E_0: y^2=x^3+(At^6+Bt^3+C),$$
    $$E_1: y^2=x^3+(At^2+Bt+C), \ E_2: y^2=x^3+(At^4+Bt^3+Ct^2), \ E_3: y^2=x^3+(At^6+Bt^5+Ct^4),$$
    and for $f(t) \in \Q(t)$, denote 
    $$E_f: y^2=x^3+f(t).$$
    Note that the last condition we put on $A,B,C$ is equivalent to $E_0$ being nontrivial.
\end{notations}

\begin{notations}
    We denote the third root of unity by $\omega=e^{\frac{2\pi i}{3}}$, and the complex multiplication $(x,y) \mapsto (\omega x,y)$ on elliptic curves of the form $E_f$ also by $\omega$, which will not cause ambiguity. 
    Note that some Mordell-Weil groups later in this paper will be $\Z[\omega]$-modules, so we add a subscript $\Z$ in the rank notation for abelian groups such as $\rk_{\Z}E_f(\Q(t))$.
\end{notations}

The main theorems of this paper are a formula for $\rk_{\Z}E_0(\Q(t))$ in Theorem~\ref{result-(6,3,0)} and explicit generators (up to finite index) of $E_0(\Q(t))$ in Theorem~\ref{arith-gen-(6,3,0)} that follow from the proof of Theorem~\ref{result-(6,3,0)}. 

\begin{thm} \label{result-(6,3,0)}
    We have:
    \begin{enumerate}
    
    \item 
    $$\rk_{\Z} E_0(\Q(t)) = \rk_{\Z} E_1(\Q(t))+\rk_{\Z} E_2(\Q(t))+\rk_{\Z} E_3(\Q(t));$$

    \item 
    $$\rk_{\Z} E_1(\Q(t)) = \begin{cases}
        1 \quad \text{if $\sqrt{A} \in \Q(\omega)\backslash\{0\}$ and $\sqrt[3]{\frac{B^2-4AC}{4A}} \in \Q(\omega)\backslash\{0\}$;}\\
        0 \quad \text{otherwise.}
    \end{cases}$$

    \item 
    If $A=0$, then 
    $$\rk_{\Z} E_2(\Q(t)) = \begin{cases}
         1 \quad \text{if $\sqrt[3]{B} \in \Q(\omega)\backslash\{0\}$ and $\sqrt{C} \in \Q(\omega)\backslash\{0\}$};\\
         0 \quad \text{otherwise}.
    \end{cases}$$
    
    If $A\neq 0, C=0$, then 
    $$\rk_{\Z} E_2(\Q(t)) = \begin{cases}
        1 \quad \text{if $\sqrt[3]{B} \in \Q(\omega)\backslash\{0\}$ and $\sqrt{A} \in \Q(\omega)\backslash\{0\}$}; \\
        0 \quad \text{otherwise}.
    \end{cases}$$

    If $A,C \neq 0, B^2-4AC=0$, then 
    $$\rk_{\Z} E_2(\Q(t)) = \begin{cases}
        1 \quad \text{if $\sqrt[3]{2B} \in \Q(\omega)\backslash\{0\}$ and $\sqrt{A} \in \Q(\omega)\backslash\{0\}$};\\
        0 \quad \text{otherwise}.
    \end{cases}$$

    If $A,C,B^2-4AC \neq 0$, then 
    \begin{gather*}\rk_{\Z} E_2(\Q(t))= \\ 
    \begin{cases}
        2 \quad \text{if $\sqrt{A},\sqrt{C} \in \Q(\omega)$ and
    $\sqrt[3]{2 \sqrt{A}\sqrt{C}-B}, \sqrt[3]{-2 \sqrt{A}\sqrt{C}-B} \in \Q(\omega)$;}\\
        1 \quad \text{if (at least) one of the following holds:}\\
        \ \quad \text{\ \ \ \ (i) Exactly one of $\sqrt{A},\sqrt{C}$ lies in $\Q(\omega)$,}\\
        \ \quad \text{\ \ \ \ \ \ \ \ and both of $\sqrt[3]{2\sqrt{A}\sqrt{C}-B},\sqrt[3]{-2\sqrt{A}\sqrt{C}-B}$ lie in $\Q(\omega,\sqrt{AC})$};\\
        \ \quad \text{\ \ \ \ (ii) Both of $\sqrt{A},\sqrt{C}$ lie in $\Q(\omega)$,} \\
        \ \quad \text{\ \ \ \ \ \ \ \ and exactly one of $\sqrt[3]{2\sqrt{A}\sqrt{C}-B},\sqrt[3]{-2\sqrt{A}\sqrt{C}-B}$ lies in $\Q(\omega)$;}\\
        0 \quad \text{otherwise}.
    \end{cases}\end{gather*}

    \item 
    $$\rk_{\Z} E_3(\Q(t)) = \begin{cases}
        1 \quad \text{if $\sqrt{C} \in \Q(\omega)\backslash\{0\}$ and $\sqrt[3]{\frac{B^2-4AC}{4C}} \in \Q(\omega)\backslash\{0\}$;}\\
        0 \quad \text{otherwise.}
    \end{cases}$$

    \end{enumerate}

\end{thm}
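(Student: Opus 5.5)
Part (a) comes from the Chahal--Meijer--Top decomposition applied to the cyclic cover $s=t^3$. The field $\bQ(t)$ is a Galois extension of $\bQ(s)$ with group $\mu_3$ acting by $t\mapsto \zeta t$. So $E_0(\bQ(t))\otimes\Q$ splits into eigenspaces for this action. The invariant part is $E_0$ viewed over $\bQ(s)$, namely $y^2=x^3+As^2+Bs+C$, which is $E_1$.

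For the other two characters, a point $P$ with $P(\zeta t)=\zeta^{\mp1}\cdot P(t)$ is twisted by $t^{2}$ or $t^{4}$ (a sextic-twist factor $t^{2k}$ rescaling $x$ by $t^{2k/3}$-type substitutions). Concretely:
\begin{itemize}
\item setting $x=t^{2}X$, $y=t^{3}Y$ and $s=t^3$ turns the twisted eigenpart into $Y^2=X^3+(As^2+Bs+C)s^{-2}$, which after clearing is $E_2$ in $s$;
\item similarly, $x=t^4X$ yields $E_3$.
\end{itemize}
These identifications are defined over $\Q$, since the isomorphism $(x,y,t)\mapsto(t^{2k}x,t^{3k}y,t^3)$ is rational. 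Taking $\Gal(\bQ/\Q)$-invariants commutes with the eigen-decomposition up to finite index. Hence $\rk_\Z E_0(\Q(t))$ is the sum of the three ranks.

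Parts (b)--(d) use the Shioda--Tate formula over $\bQ$ to compute the geometric ranks. Each $E_i$ is rational with $\deg\le 6$, so $\rk E_i(\bQ(t))$ is $8$ minus the fibre contributions, and it is a $\Z[\omega]$-module. For $E_1$ the singular fibres are at the two roots of $At^2+Bt+C$ (or at $\infty$), and I expect geometric rank $2$ (one $\Z[\omega]$-generator). Completing the square gives $E_1\cong y^2=x^3+A u^2 - D$ with $u=t+B/2A$ and $D=(B^2-4AC)/(4A)$. A generator is then $P=(\sqrt[3]{D},\sqrt{A}\,u)$, and the full group is $\Z[\omega]P$ up to finite index.

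The Galois action on $P$ is through $\sigma(P)=\epsilon\,\omega^j P$ with $\epsilon=\pm1$ (from $\sqrt A$) and $\omega^j$ (from the cube root). So the invariant part of $\Z[\omega]P$ is nonzero iff the resulting character $\Gal\to \mu_6$ is trivial on $\Gal(\bQ/\Q(\omega))$; for $\sigma$ complex conjugation one checks it always has a rank-one fixed line. This gives $\rk 1$ exactly when $\sqrt A,\sqrt[3]D\in\Q(\omega)$, and $0$ otherwise. Part (d) is symmetric under $t\mapsto 1/t$ (swapping $A,C$).

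For $E_2$, I factor $At^4+Bt^3+Ct^2=t^2(At^2+Bt+C)$ and do the degenerate cases separately, each reducing to a single generator like $E_1$. The main case is $A,C,B^2-4AC\neq0$. There the geometric rank is $4$, spanned over $\Z[\omega]$ by points of the form $x=-\sqrt[3]{\pm2\sqrt{A}\sqrt{C}-B}\,t$, $y=\sqrt{A}t^2\pm\sqrt{C}t$-type. These come from writing $At^4+Bt^3+Ct^2+\beta^3t^3=(\sqrt A t^2+\sqrt C t)^2$ with $\beta^3=2\sqrt{AC}-B$, and similarly with $-\sqrt{C}$.

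This is the step I expect to be the main obstacle. The two generators $P_+,P_-$ are permuted by the Galois element flipping $\sqrt{C}$ (or $\sqrt{AC}$), so the module does not split into rank-one Galois-stable pieces. I would proceed as follows:
\begin{enumerate}
\item Compute the height pairing of $P_\pm$ and their $\omega$-multiples to confirm independence.
\item Write the action of a few specific Galois elements as $2\times2$ matrices over $\Z[\omega]$: complex conjugation, $\sqrt A\mapsto-\sqrt A$, $\sqrt C\mapsto -\sqrt C$, and the cube-root moves.
\item Determine the fixed sublattice case by case according to which of $\sqrt A,\sqrt C$ and the two cube roots lie in $\Q(\omega)$ or $\Q(\omega,\sqrt{AC})$.
\end{enumerate}
This recovers the $2/1/0$ trichotomy.
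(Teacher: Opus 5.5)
\textbf{Verdict: genuine gap.} Parts (a), (b), (d) are sound in outline. Part (c), which carries the content of the theorem, is only asserted.

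\textbf{Parts (a), (b), (d).} These differ from the paper only in bookkeeping.

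For (a), you split under the $\mu_3$-cover $s=t^3$ directly over $\bQ$. You then use that the twisting embeddings $(x(s),y(s))\mapsto(x(t^3)/t^{2k},y(t^3)/t^{3k})$ are defined over $\Q$, so $\Gal(\bQ/\Q)$-invariants split accordingly. The paper instead takes the $\sigma^3$-fixed part of the $\mu_6$-decomposition over $\Q(\omega)$ and then halves using Lemma~\ref{omega}.

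For (b), (d), you use Shioda--Tate, the explicit section and a Galois character (the paper's Remark~\ref{alt-pf-(2,1,0)}) instead of citing Bremner. You still owe $\langle P,P\rangle=\frac23$ for non-torsion, and the check that the geometric rank vanishes when $A=0$ or $B^2=4AC$.

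Two slips:
\begin{itemize}
\item $x=t^2X$, $y=t^3Y$ gives $Y^2=X^3+f(s)/s^2$, which is $E_3$ after multiplying by $s^6$, while $x=t^4X$, $y=t^6Y$ gives $E_2$.
\item Your sections need $x=+\sqrt[3]{\pm2\sqrt A\sqrt C-B}\,t$, as your own identity $(\sqrt At^2\pm\sqrt Ct)^2-(At^4+Bt^3+Ct^2)=(\pm2\sqrt{AC}-B)t^3$ shows.
\end{itemize}

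\textbf{Part (c), degenerate cases.} These are only asserted. For $B^2=4AC$ you need an actual section. For example, $x=c\,t\bigl(t+\frac{B}{2A}\bigr)$ with $c^3=\frac{16A^3}{B^2}$ and $y=\frac{4A\sqrt A}{B}\,t\bigl(t+\frac{B}{2A}\bigr)\bigl(t+\frac{B}{4A}\bigr)$ has height $\frac23$. It is defined over $\Q(\omega)$ exactly when $\sqrt[3]{2B},\sqrt A\in\Q(\omega)$. Alternatively, use the paper's reductions to Bremner.

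\textbf{Part (c), non-degenerate case.} Your plan is the paper's plan, but you stop before the case analysis, and the set-up is not quite right.
\begin{itemize}
\item Complex conjugation is only $\Z[\omega]$-semilinear, so it has no $\Z[\omega]$-matrix. Compute invariants of $G=\Gal(\bQ/\Q(\omega))$ and halve, as you did in (b).
\item The sign changes of $\sqrt A$ and $\sqrt C$ are not independent; when $\sqrt{AC}\in\Q(\omega)$ they coincide. Which combinations occur is exactly what decides the rank.
\end{itemize}

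Write $\alpha=\sqrt[3]{2\sqrt{AC}-B}$ and $\beta=\sqrt[3]{-2\sqrt{AC}-B}$. The missing arguments are:
\begin{enumerate}
\item[(1)] Suppose neither $\sqrt A$ nor $\sqrt C$ lies in $\Q(\omega)$. Then some $\sigma\in G$ negates both, whether $[\Q(\omega,\sqrt A,\sqrt C):\Q(\omega)]$ is $4$ or $2$. This $\sigma$ fixes $\sqrt{AC}$, hence $\alpha^3$ and $\beta^3$, so $\sigma(P)=-\omega^iP$ and $\sigma(Q)=-\omega^jQ$. Since $1+\omega^i\neq0$, no nonzero $c_1P+c_2Q$ is invariant, even when $\alpha,\beta\in\Q(\omega)$.
\item[(2)] Suppose $\alpha\notin\Q(\omega,\sqrt{AC})$. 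Then some $\sigma$ fixing $\sqrt{AC}$ moves $\alpha$. It acts diagonally on $P,Q$ and multiplies $P$ by a sixth root of unity $\neq1$, forcing $c_1=0$. Likewise $c_2=0$ when the same holds for $\beta$.
\item[(3)] Suppose exactly one of $\sqrt A,\sqrt C$ lies in $\Q(\omega)$, so $\sqrt{AC}\notin\Q(\omega)$. If $\alpha=a+b\sqrt{AC}$ with $a,b\in\Q(\omega)$, then $\beta=\omega^i(a-b\sqrt{AC})$, so $\alpha$ and $\beta$ lie in $\Q(\omega,\sqrt{AC})$ together. The generator of $\Gal(\Q(\omega,\sqrt{AC})/\Q(\omega))$ sends $P\mapsto\epsilon\,\omega^{-i}Q$, with $\epsilon=-1$ if $\sqrt C\in\Q(\omega)$ and $\epsilon=1$ if $\sqrt A\in\Q(\omega)$. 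Hence $P+\epsilon\,\omega^{-i}Q$ is invariant.
\end{enumerate}
Running (1)--(3) over the possible positions of $\sqrt A,\sqrt C,\alpha,\beta$ is what produces the stated $2/1/0$ conditions. Without it, those conditions are asserted rather than proved.
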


\begin{thm} \label{arith-gen-(6,3,0)}
    We have:
    \begin{enumerate}
        \item 
        For $i \in \{1,2,3\}$, let $S_i$ be a basis of a finite index subgroup of $E_i(\Q(t))$. 
        Then the set 
        $$ S_0=\left\{\left(\frac{x(t^3)}{t^{2(i-1)}}, \frac{y(t^3)}{t^{3(i-1)}}\right) \bigg| \ i \in \{1,2,3\}, (x(t),y(t))\in S_i\right\}$$
        is a basis of a finite index subgroup of $E_0(\Q(t))$.
        
        \item 
        Suppose $\rk_{\Z} E_1(\Q(t))>0$. Let 
        $P=\left( \sqrt[3]{\frac{B^2-4AC}{4A}}, \sqrt{A}t +\frac{B}{2\sqrt{A}}   \right)$ where we take the real cube root. 
        Then a basis (up to finite index) of the group $E_1(\Q(t))$ is 
        $\begin{cases}
            \{P\} \quad \text{if $\sqrt{A} \in \Q$}\\
            \{P+2\omega P\} \ \text{otherwise}.
        \end{cases}$
        
        \item 
        Suppose that $\rk_{\Z}E_2(\Q(t))>0$ and either $A=0$, $C=0$, or $B^2-4AC=0$. 
        Let 
        $$P=\begin{cases}
            (-\sqrt[3]{B} \cdot t, \sqrt{C}\cdot t) \quad \text{if $A=0$;}\\
            (-\sqrt[3]{B} \cdot t, \sqrt{A}\cdot t^2) \quad \text{if $A \neq 0$, $C=0$;}\\
            \left( -\sqrt[3]{\frac{B^4}{256A^3}} \bigg( -\frac{16A^2}{B^2} \left( t+\frac{B}{4A} \right)^2 +1  \right),\\ 
            \ \ \ \sqrt{\frac{B^2}{16A}} \left(  -\frac{16A^2}{B^2} \left( t+\frac{B}{4A} \right)^3 + \left( t+\frac{B}{4A} \right) \right)  \bigg) \quad \text{if $A,C \neq 0, B^2-4AC=0$,}
        \end{cases}$$ 
        where we take the real cube root in all three cases.
        
        Then a basis (up to finite index) for the group $E_2(\Q(t))$ is $\begin{cases}
            \{P\} \quad \text{if $\sqrt{A}, \sqrt{C} \in \Q$}\\
            \{P+2\omega P\} \quad \text{otherwise}.
        \end{cases}$
        
        \item 
        Suppose that $\rk_{\Z}E_2(\Q(t))>0$ and $A, C, B^2-4AC \neq 0$. 
        Let 
        $$\ \ \ \ \ \ \ \ P=\left(\sqrt[3]{2 \sqrt{A}\sqrt{C}-B} \cdot t, \   \sqrt{A} t^2 + \sqrt{C} t\right), \ Q=\left(\sqrt[3]{-2 \sqrt{A}\sqrt{C}-B} \cdot t, \   \sqrt{A} t^2 - \sqrt{C} t\right).$$
        
        (i) If $\rk_{\Z}E_2(\Q(t))=2$, then $\sqrt{A},\sqrt{C} \in \Q(\omega)$, and the two cube roots lie in $\Q(\omega)$. If the cube root can be real, we take the real cube root. If not, we can take the cube roots so that the two cube roots are conjugate in $\Q(\omega)/\Q$. 
        
        Then a basis (up to finite index) for the group $E_2(\Q(t))$ is 
        $$\begin{cases}
            \{(1+2\omega)P, (1+2\omega)Q\} \quad &\text{if $\sqrt{A}, \sqrt{C} \notin \Q$}\\
            \{P-Q, (1+2\omega)(P+Q)\} \quad &\text{if $\sqrt{A} \notin \Q$, $\sqrt{C} \in \Q$}\\
            \{P+Q, (1+2\omega)(P-Q)\} \quad & \text{if $\sqrt{A} \in \Q$, $\sqrt{C} \notin \Q$}\\
            \{P,Q\} \quad & \text{if $\sqrt{A}, \sqrt{C} \in \Q$}.
        \end{cases}$$

        (ii) If $\rk_{\Z}E_2(\Q(t))=1$ and exactly one of $\sqrt{A},\sqrt{C}$ lies in $\Q(\omega)$, then the two cube roots lie in $\Q(\omega,\sqrt{AC})$, and we can take the two cube roots so that they are conjugate in $\Q(\omega,\sqrt{AC})/\Q(\omega)$. 
        We let 
        $$R=\begin{cases}
            P-Q \quad \text{if $\sqrt{A}\notin \Q(\omega)$, $\sqrt{C} \in \Q(\omega)$;}\\
            P+Q \quad \text{if $\sqrt{A}\in \Q(\omega)$, $\sqrt{C} \notin \Q(\omega)$}.
        \end{cases}$$ 
        Let $\tau$ be the nontrivial element in $\Gal(\Q(\omega)/\Q)$. 
        Then a basis (up to finite index) for the group $E_2(\Q(t))$ is 
        $$\begin{cases}
            \{(1+2\omega)R\} \quad \text{if $\tau(R)=-R$;}\\
            \{R+\tau(R)\} \quad \text{otherwise}.
        \end{cases}$$

        (iii) If $\rk_{\Z}E_2(\Q(t))=1$ and both of $\sqrt{A},\sqrt{C}$ lie in $\Q(\omega)$, then exactly one of $P,Q$ lies in $E_2(\Q(\omega)(t))$. 
        
        Let $R$ be the one of $P,Q$ that lies in $E_2(\Q(\omega)(t))$. 
        
        Then a basis (up to finite index) for the group $E_2(\Q(t))$ is 
        $$\begin{cases}
            \{(1+2\omega)R\} \quad \text{if $\tau(R)=-R$;}\\
            \{R+\tau(R)\} \quad \text{otherwise}.
        \end{cases}$$
        
        \item 
        Suppose $\rk_{\Z} E_3(\Q(t))>0$. Let 
        $P=\left( \sqrt[3]{\frac{B^2-4AC}{4C}}\cdot t^2, \  \frac{B}{2\sqrt{C}} \cdot t^3 + \sqrt{C}\cdot t^2  \right),$ where we take the real cube root.
        
        Then a basis (up to finite index) of the group $E_3(\Q(t))$ is 
        $\begin{cases}
            \{P\} \quad \text{if $\sqrt{C} \in \Q$}\\
            \{P+2 \omega P\} \quad \text{otherwise}.
        \end{cases}$
    \end{enumerate}
\end{thm}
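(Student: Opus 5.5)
The plan is to prove Theorem~\ref{arith-gen-(6,3,0)} by following the structure of the proof of Theorem~\ref{result-(6,3,0)}, making explicit the generators that appear there.

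\textbf{Part (a).} Use the $\mu_3$-action $t \mapsto \zeta t$ on $E_0$, as in Chahal--Meijer--Top \cite{CMT}. Since $f(t)=At^6+Bt^3+C$ is a polynomial in $t^3$, the map $\sigma:(x(t),y(t))\mapsto(x(\zeta t),y(\zeta t))$ is an automorphism of $E_0(\bQ(t))$. Decompose up to finite index into the eigenspaces of $\sigma$, i.e.\ the images of the three idempotent projectors $\frac{1}{3}\sum_j \zeta^{-kj}\sigma^j$.
\begin{itemize}
\item The $\sigma$-invariant part is the set of points defined over $\bQ(t^3)$. This is $E_1(\bQ(s))$ with $s=t^3$.
\item A point in the $\zeta^{k}$-eigenspace can be rescaled by $t^{2(i-1)}$, $t^{3(i-1)}$. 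This is exactly the substitution in the statement, and it identifies that eigenspace with $E_i(\bQ(s))$ for $i=2,3$. Check directly: $(x/t^2)^3+\dots$ becomes $x^3+(As^2+Bs+C)s^{2}$ after clearing, which gives $E_2$, and similarly $E_3$.
\end{itemize}
Both the rescaling maps and the projectors commute with $\Gal(\bQ/\Q)$, because the projector sum is Galois-stable as a whole. Taking Galois invariants therefore shows that the images of $E_i(\Q(t))$ together generate a finite-index subgroup of $E_0(\Q(t))$. The images are independent, since they lie in different eigenspaces. So the union $S_0$ of the images of bases $S_i$ is a basis up to finite index. This is essentially the content of Theorem~\ref{result-(6,3,0)}(a), and I will reuse that argument.

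\textbf{Parts (b), (c), (e).} Here $E_i$ is a rational elliptic surface whose geometric Mordell--Weil group has $\Z$-rank $2$, i.e.\ it is a rank-one $\Z[\omega]$-module, spanned up to finite index by $P$ and $\omega P$.
\begin{itemize}
\item First verify that $P$ lies on the curve; this is a direct substitution, e.g.\ for $E_1$, $(\sqrt{A}t+B/2\sqrt{A})^2=At^2+Bt+C+\frac{B^2-4AC}{4A}$.
\item Next compute how $\Gal(\bQ/\Q)$ acts on $P$. Assume the conditions of Theorem~\ref{result-(6,3,0)} hold. Take the real cube root; then it is rational, because a rational number whose cube root lies in $\Q(\omega)$ has a rational real cube root. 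Only $\sqrt{A}$ can move, and $\tau:\sqrt{A}\mapsto-\sqrt{A}$ sends $P\mapsto -P$.
\item If $\sqrt{A}\in\Q$, then $P$ is rational.
\item Otherwise $\sqrt{A}\in\Q(\omega)\setminus\Q$, so complex conjugation $\tau$ acts by $\sqrt{A}\mapsto-\sqrt{A}$ and $\omega\mapsto\omega^2$. Then $\tau(\omega P)=\omega^2(-P)$, and hence $\tau(P+2\omega P)=-P-2\omega^2P=P+2\omega P$, using $1+\omega+\omega^2=0$. So $P+2\omega P$ is rational.
\end{itemize}
Since the arithmetic rank is $1$ by Theorem~\ref{result-(6,3,0)}, this nontorsion invariant point is a basis up to finite index. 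Parts (c) and (e) go the same way. In the $B^2=4AC$ case of (c), $P$ depends on $\sqrt{A}$ only through $\sqrt{B^2/16A}$, i.e.\ up to sign through $\sqrt{A}$.

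\textbf{Part (d).} This is the main obstacle. Here $E_2$ has geometric rank $4$, i.e.\ $\Z[\omega]$-rank $2$, spanned by $P$ and $Q$. Independence of $P$ and $Q$ should come from the height computation already used in Theorem~\ref{result-(6,3,0)}.
\begin{itemize}
\item Record the action of the generators of the relevant Galois group: $\sqrt{A}\mapsto-\sqrt{A}$, $\sqrt{C}\mapsto-\sqrt{C}$, cube-root multiplication by $\omega$, and $\tau$.
\item Flipping $\sqrt{C}$ swaps the two cube-root arguments. With the chosen cube roots it sends $P\leftrightarrow Q$ (up to $\omega$-powers), and flipping $\sqrt{A}$ sends $P\leftrightarrow -Q$.
\item In case (i), check each of the four listed pairs is $\tau$-invariant, using the same $1+2\omega$ trick: $\tau(1+2\omega)=-(1+2\omega)$. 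The cube roots are chosen to be conjugate under $\tau$ precisely so that $\tau$ maps $P$ to $\pm Q$ or $\pm P$ with no stray $\omega$ factors. The pairs are visibly independent, so rank $2$ gives finite index.
\item In cases (ii) and (iii), first show $R$ is defined over $\Q(\omega)$, by fixedness under the flips. Then $R+\tau R$ or $(1+2\omega)R$ is rational. It is nonzero, because either $\tau R\neq -R$, or $(1+2\omega)R\neq0$ since $R$ has infinite order.
\end{itemize}
The delicate bookkeeping is the choice of cube roots so that the Galois action carries no extra $\omega$-twists. I would handle it by writing each conjugate explicitly and comparing $x$-coordinates.
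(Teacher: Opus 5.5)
Your proposal is correct and is essentially the paper's proof. Parts (b)--(e) use the same Galois bookkeeping: the square-root flip sending $P\mapsto -P$ or $P\mapsto \pm Q$, the identity $\tau(1+2\omega)=-(1+2\omega)$, choosing the cube roots conjugate to remove stray $\omega$-twists, and $R+\tau(R)$ versus $(1+2\omega)R$. For (a), your $\mu_3$-eigenspace argument with Galois-equivariant projectors is a direct version of the CMT decomposition the paper cites, and it has the advantage of making the descent from $\bQ(t)$ to $\Q(t)$ explicit.

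Fill in these few points:
\begin{enumerate}
\item $P$ is non-torsion in (b), (c), (e). The paper gets this from Bremner or from the heights in Remarks~\ref{alt-pf-(2,1,0)} and~\ref{alt-pr-(3,2)}, transported through the changes of variables of Subsection~\ref{subsection-degen}.
\item A nonzero $R+\tau(R)$ is non-torsion because $E_2(\bQ(t))$ is torsion-free by Lemma~\ref{lem-geo-rk-2}.
\item In (d)(iii), exactly one of $P,Q$ is defined over $\Q(\omega)$ by Theorem~\ref{result-(4,3,2)}(b)(ii).
\item In the $B^2=4AC$ case of (c), $\sqrt{A}\in\Q \iff \sqrt{C}\in\Q$ since $AC=B^2/4$.
\end{enumerate}
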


Now we give an outline of our paper.   
In Section \ref{section-decomposition}, we cite a result from \cite{CMT} to show that we only have to work with the groups $E_i(\Q(\omega)(t))$ ($i \in \{0,1,2,3\}$) which are $\Z[\omega]$-modules by the complex multiplication, and we use 
another result in \cite{CMT} to decompose $E_0(\Q(\omega)(t))$ as a direct sum of $E_1(\Q(\omega)(t))$, $E_2(\Q(\omega)(t))$, and $E_3(\Q(\omega)(t))$ up to finite index. 
In Section \ref{section-Bremner}, we cite some results in \cite{Br} for low-degree $f(t)$ 
and rearrange them into the form we need. 
In Section \ref{section-(6,3,0)}, we prove Theorem~\ref{result-(6,3,0)} with the main focus on part (c) which finds $\rk_{\Z} E_2(\Q(t))$. 
This is the part where the geometric Mordell-Weil group $E_2(\bQ(t))$ has $\Z[\omega]$-rank $2$ and cannot be decomposed into rank-one submodules by the methods in \cite{Kl} and \cite{DN}. 
In Section~\ref{section-arith-gen}, we prove Theorem~\ref{arith-gen-(6,3,0)} which finds the explicit arithmetic generators for Theorem~\ref{result-(6,3,0)}. 
In Section~\ref{section-remarks}, we find the largest possible rank of $E_0(\Q(t))$, give examples to achieve this rank in different ways, and give an alternative proof for the parts in \cite{Br} that we have used. 

The proof of Theorem \ref{result-(6,3,0)} part (c) in Section ~\ref{section-(6,3,0)} can be outlined as follows. 
In Lemma ~\ref{rank-criteria} and Lemma~\ref{G-acts-on-M}, we prove that $\rk_{\Z[\omega]}E_2(\Q(\omega)(t))$ can be determined by the linear relations between a basis (up to finite index) and their Galois conjugates for varying $\sigma \in G:=\Gal(\bQ/\Q(\omega))$. These lemmas imply \cite{Kl} Lemma 2.8. In Proposition ~\ref{gen-over-C}, we find independent generators $P,Q$ for our rank-$2$ module $E_2(\bQ(t))$. In Theorem~\ref{result-(4,3,2)}, 
we 
investigate the relations between $P-P^{\sigma}$ and $Q-Q^{\sigma}$ for a few $\sigma \in G$ and prove Theorem~\ref{result-(6,3,0)} part (c).

\section*{Acknowledgement}

I would like to express my sincere gratitude to my supervisor Julie Desjardins for her invaluable supervision and support throughout this work, and for her valuable feedback on all the earlier drafts of this paper. 
I would like to thank the University of Toronto for their financial support for my PhD program, during which this research was conducted. I would also like to thank Sajad Salami for his helpful comments on the first arXiv version of this paper.

\section{Decomposition} \label{section-decomposition}

In this section, we first cite a result in \cite{CMT} to show that we only have to work with $E_i(\Q(\omega)(t))$, which are $\Z[\omega]$-modules by the complex multiplication $\omega: (x,y) \mapsto (\omega x,y)$. 
Then we use another result in \cite{CMT} to decompose $E_0(\Q(\omega)(t))$ as a direct sum of $E_1(\Q(\omega)(t))$, $E_2(\Q(\omega)(t))$, and $E_3(\Q(\omega)(t))$ up to finite index. We also recall some simple change of variables that will be used later in this paper.

\begin{lem} \label{omega} (\cite{CMT} Lemma 2.2)
    Let $K$ be a field that does not contain $\sqrt{-3}$ and $L=K(\sqrt{-3})$. 
    Let $E$ be an elliptic curve of the form $y^2=x^3+b$ where $b \in K$. If $\rk_{\Z} E(K)$ is finite, then $\rk_{\Z} E(L)=2\rk_{\Z} E(K)$.
\end{lem}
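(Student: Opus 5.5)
The plan is to decompose $E(L)\otimes\Q$ into eigenspaces for the Galois involution and to identify the $(-1)$-eigenspace with the $K$-points of the quadratic twist by $-3$, which for this family of curves is isomorphic to $E$ itself. Let $\tau$ be the nontrivial element of $\Gal(L/K)$. It acts on $E(L)$, and since $E$ is defined over $K$, this action is a group automorphism with $E(L)^{\tau}=E(K)$. First I show that $E(L)$ has finite rank. Every $P\in E(L)$ satisfies $2P=(P+\tau P)+(P-\tau P)$. Here $P+\tau P\in E(K)$. The point $P-\tau P$ is sent to its negative by $\tau$, so it lies in the group $E(L)^{-}=\{Q: \tau Q=-Q\}$. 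Hence $E(L)\otimes\Q=(E(K)\otimes\Q)\oplus(E(L)^{-}\otimes\Q)$, the sum being direct because the eigenvalues $\pm1$ differ and we have tensored with $\Q$. So it suffices to prove $\rk_{\Z}E(L)^{-}=\rk_{\Z}E(K)$.

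The key step is to use the automorphism $\omega:(x,y)\mapsto(\omega x,y)$, which is defined over $L$ because $\omega=(-1+\sqrt{-3})/2\in L$. Since $\tau(\omega)=\omega^2=\bar\omega$, we get $\tau(\omega P)=\omega^2\tau(P)$ for $P\in E(L)$. Set $\phi=1+2\omega=\sqrt{-3}$ as an endomorphism, i.e.\ $\phi(P)=P+2\omega P$. Then
\[
\tau(\phi P)=\tau P+2\omega^2\tau P=(1+2\omega^2)\tau P=-\phi(\tau P),
\]
using $1+2\omega^2=-(1+2\omega)$, which follows from $1+\omega+\omega^2=0$. Consequently $\phi$ maps $E(K)$ into $E(L)^{-}$ and maps $E(L)^{-}$ into $E(K)$. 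Moreover $\phi$ is an isogeny of degree $3$ with kernel $E[\phi]$, which is finite, so $\phi$ is injective modulo torsion on each piece. Therefore $\rk E(K)\le\rk E(L)^{-}$ and $\rk E(L)^{-}\le \rk E(K)$. This gives equality, and all ranks are finite because $\rk E(K)$ is finite. Combining with the first paragraph yields $\rk_{\Z}E(L)=2\rk_{\Z}E(K)$.

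The main point to check carefully is the sign computation $\tau(\omega P)=\omega^2\tau(P)$. In coordinates, $\tau(\omega x, y)=(\tau(\omega)\tau(x),\tau(y))=(\omega^2\tau x,\tau y)$, which is $\omega^2$ applied to $\tau P$. We also need that $\phi$ is expressible by rational functions with coefficients in $L$, so that it commutes appropriately with $\tau$; this holds because $\phi=1+2\omega$ is built from the group law and $\omega$, both defined over $L$. Everything else is formal. The hypothesis $\sqrt{-3}\notin K$ is used only to guarantee that $L/K$ is a genuine quadratic extension with a nontrivial $\tau$.
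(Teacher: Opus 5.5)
Your proof is correct. The paper does not prove Lemma~\ref{omega} itself; it quotes it from \cite{CMT}, so your argument gives a self-contained proof of a statement the paper treats as a black box.

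Both ingredients check out.

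\begin{itemize}
\item The splitting $E(L)\otimes\Q=(E(K)\otimes\Q)\oplus(E(L)^-\otimes\Q)$ follows from $2P=(P+\tau P)+(P-\tau P)$.
\item For $\phi=1+2\omega$ you get the anti-commutation $\tau\circ\phi=-\phi\circ\tau$. This uses $\tau\circ[\omega]=[\omega^2]\circ\tau$ (checked in coordinates) and $1+2\omega^2=-(1+2\omega)$ in $\mathrm{End}(E)$. The latter holds because $(x,y),(\omega x,y),(\omega^2x,y)$ lie on the line $Y=y$.
\end{itemize}

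Injectivity modulo torsion follows at once from $(1+2\omega^2)\circ\phi=[3]$.

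This is the same mechanism the paper uses later, in Section~\ref{section-arith-gen}. There, $\tau(1+2\omega)=-(1+2\omega)$ is what makes $P+2\omega P$, or $(1+2\omega)R$, a $\Q(t)$-point when $\tau(P)=-P$. That is exactly your map $\phi\colon E(L)^-\to E(K)$. So your proof also explains why the generators in Theorem~\ref{arith-gen-(6,3,0)} take the form they do.

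There is one inaccuracy in your opening sentence, though the argument never uses it. The quadratic twist of $E$ by $-3$ is not isomorphic to $E$ over $K$. The twist is $y^2=x^3-27b$, and it is $K$-isomorphic to $y^2=x^3+b$ only if $-27=u^6$ for some $u\in K$, which forces $\sqrt{-3}\in K$. What is true is that the twist is $3$-isogenous to $E$ over $K$, and $\phi$ realizes that isogeny on points. Equal ranks is all you need, so ``isogenous'' is the right word.

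Similarly, your remark that $\phi$ being defined over $L$ lets it ``commute appropriately'' with $\tau$ points the wrong way. What you actually use is that the group law is defined over $K$, so $\tau$ is a homomorphism of $E(L)$, together with the explicit coordinate formula for $[\omega]$. Your displayed computation does exactly that.
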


\begin{notations}
    Let $f(t) \in \Q(t)$. We denote $R(f(t))= \rk_{\Z} E_f(\Q(\omega)(t))$ and $r(f(t))=\rk_{\Z} E_f(\Q(t))$. Then Lemma ~\ref{omega} tells us that $R(f(t))=2r(f(t))$.
\end{notations}

Lemma~\ref{CMT-2.1} and Lemma~\ref{reduction} decompose $E_0(\Q(\omega)(t))$ as a direct sum of $E_i(\Q(\omega)(t))$ ($i \in \{1,2,3\}$) up to finite index. 
Lemma~\ref{CMT-2.1} and part (c) of Lemma~\ref{reduction} are proved exactly in \cite{CMT} Lemma 2.1, while Lemma~\ref{reduction} parts (a) and (b) are stated only for the ground field being $\bQ(t)$ instead of $\Q(\omega)(t)$ in the middle of the proof of \cite{CMT} Proposition 4.1.
They also mentioned, without proof, that adapting their proof of Lemma 2.1 in \cite{CMT} gives Lemma~\ref{reduction} parts (a) and (b), but we prefer not to reproduce the entire proof.
Therefore, we give a simple proof to derive parts (a) and (b) from part (c) of Lemma ~\ref{reduction}.

\begin{lem} \label{CMT-2.1} (\cite{CMT} Lemma 2.1)
    Let $k/\Q(\omega)$ be a field extension and let $K=k(s)$ be a purely transcendental extension of $k$. Consider $L:=k(t)$ over $K$, in which $t^6=s$. Then $L/K$ is cyclic of degree $6$. We fix a generator $\sigma \in \Gal(L/K)$ such that $\sigma(t)=-\omega^2 t$. 
    Let $E_{f(s)}$ be the elliptic curve defined by the equation $y^2=x^3+f(s)$ for some rational function $f(s) \in K$.
    
    Then 
    $V:=E_{f(s)}(L)\otimes_{\Z}\Q$ is a $\Q(\omega)$-vector space and  
    $\sigma$ acts $\Q(\omega)$-linearly on $V$, 
    and the $\Q(\omega)$-vector space $V$ splits as a direct sum 
    $$\sum_{n=0}^5 V_{(-\omega^2)^n},$$
    where $V_{\lambda}$ is the eigenspace of $\sigma$ corresponding to the eigenvalue $\lambda$.
    Further, the $\Q(\omega)$-vector space $V_{(-\omega^2)^n}$ can be identified with $E_{s^nf(s)}(K) \otimes_{\Z} \Q$. 
    
\end{lem}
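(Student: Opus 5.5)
The plan is to prove the statement directly by Galois descent along the Kummer extension $L=k(t)$, $t^6=s$, over $K=k(s)$.

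\textbf{Structure of $L/K$ and of $V$.} Since $k\supseteq\Q(\omega)$ contains the primitive sixth root of unity $-\omega^2$, Kummer theory shows that $L/K$ is cyclic of degree $6$. Indeed, $s$ is not a $d$-th power in $K$ for any $d>1$, because $s$ is a prime of $k[s]$; so $X^6-s$ is irreducible. The automorphism $\sigma$ with $\sigma(t)=-\omega^2t$ generates $\Gal(L/K)$. Because $\omega\in k$ is fixed by $\sigma$ and $f(s)\in K$, the map $\sigma$ acts on $E_{f(s)}(L)$ as a group automorphism commuting with the complex multiplication $(x,y)\mapsto(\omega x,y)$. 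Hence $V=E_{f(s)}(L)\otimes_{\Z}\Q$ is a $\Q(\omega)$-vector space (through $\Z[\omega]\subseteq\mathrm{End}$) on which $\sigma$ acts $\Q(\omega)$-linearly. Since $\sigma^6=1$ and $X^6-1$ splits into distinct linear factors over $\Q(\omega)$, the operator $\sigma$ is diagonalizable. This gives $V=\bigoplus_{n=0}^5V_{(-\omega^2)^n}$, using the projectors $\frac16\sum_j\lambda^{-j}\sigma^j$.

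\textbf{The twisting maps.} For each $n$ let $\zeta=-\omega^2$ and define
\[\phi_n: E_{s^nf(s)}(K)\to E_{f(s)}(L),\quad (x,y)\mapsto (x/t^{2n}, y/t^{3n}).\]
This is well defined because $y^2=x^3+t^{6n}f$ becomes $(y/t^{3n})^2=(x/t^{2n})^3+f$. We then compute
\[\sigma\phi_n(x,y)=(x\,\zeta^{-2n}t^{-2n},\,y\,\zeta^{-3n}t^{-3n}).\]
Here $\zeta^{-3n}=(-1)^n$ and $\zeta^{-2n}=\omega^{-4n}\cdot 1=\omega^{2n}$. Since $\zeta^{-1}=-\omega$, the automorphism $(x,y)\mapsto(\omega^{2n}x,(-1)^ny)$ of $y^2=x^3+f$ equals $[-1]^n\circ[\omega]^{2n}=[(-\omega^2)^{n}]$, up to the choice of the sign convention for $\zeta$ versus $\zeta^{-1}$. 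So $\phi_n$ lands in an eigenspace. I will pin the exponent so that the eigenvalue is exactly $(-\omega^2)^n$; if it comes out as the inverse, the indexing is relabelled by $n\mapsto -n$ and twisting by $t^{-2n}$, which gives the same set of twists up to $s$-power isomorphism ($s^{n}$ versus $s^{n-6}$).

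\textbf{Surjectivity onto the eigenspace (the main step).} Given $P=(X,Y)\in E_{f}(L)$ with $\sigma P=\lambda P$ for $\lambda=(-\omega^2)^n$ an automorphism, the point $(t^{2n}X,t^{3n}Y)$ is $\sigma$-invariant, hence lies in $E_{s^nf}(K)$ by Galois descent. This shows that $\phi_n$ is an isomorphism onto the subgroup of points with $\sigma P=[\lambda]P$ exactly, and its kernel is trivial. The obstacle is that the eigenspace in $V$ consists of points $P$ with $\sigma P-\lambda P$ torsion, or only after tensoring with $\Q$. I handle this with the projector: for any $v\in V_\lambda$, the element $6v=\sum_j\lambda^{-j}\sigma^j v$, and the lift $\sum_j[\lambda^{-j}]\sigma^j(P)$ of it is an honest point satisfying $\sigma(\cdot)=[\lambda](\cdot)$ on the nose. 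Hence $\phi_n\otimes\Q$ is surjective onto $V_\lambda$, and it is injective since $\phi_n$ is. Finally, $\phi_n$ is $\Z[\omega]$-linear, because $\omega$ commutes with the rescaling. This yields the $\Q(\omega)$-linear identification $V_{(-\omega^2)^n}\cong E_{s^nf(s)}(K)\otimes\Q$.
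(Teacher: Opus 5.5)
Your proof is correct and is essentially the descent argument behind the cited \cite{CMT} Lemma 2.1: your $\phi_n$ is exactly the identification $(x(s),y(s))\mapsto(x(t^6)/t^{2n},y(t^6)/t^{3n})$ that the paper quotes from that proof, and the projector $\frac16\sum_j\lambda^{-j}\sigma^j$ correctly turns an element of the eigenspace of $V$ into an actual point with $\sigma P=[\lambda]P$. You should delete the hedge about $\zeta$ versus $\zeta^{-1}$, because your own computation gives $\sigma\circ\phi_n=[(-\omega^2)^n]\circ\phi_n$ exactly; the proposed fallback $n\mapsto -n$ would in fact pair $V_{(-\omega^2)^n}$ with $E_{s^{6-n}f(s)}(K)\otimes_{\Z}\Q$ rather than with $E_{s^nf(s)}(K)\otimes_{\Z}\Q$ as the lemma states.
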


Note that we will also use the identification in their proof of the above lemma, which sends $(x(s),y(s)) \in E_{s^nf(s)}(K)$ to $(x(t^6)/t^{2n}, y(t^6)/t^{3n})\in E_{f(t^6)}(L)=E_{f(s)}(L)$. We further identify $E_{s^nf(s)}(K)$ with $E_{t^nf(t)}(L)$ naturally.

\begin{lem} \label{reduction}
    Let $f(t) \in \Q(t)$. We have:
    
    (a) $R(f(t^3))=R(f(t))+R(t^2f(t))+R(t^4f(t))$;
    
    (b) $R(f(t^2))=R(f(t))+R(t^3f(t))$;
    
    (c)(\cite{CMT} Lemma 2.1) $R(f(t^6))=R(f(t))+R(tf(t))+R(t^2f(t))
    +R(t^3f(t))+R(t^4f(t))+R(t^5f(t))$.
\end{lem}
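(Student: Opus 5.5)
We derive (a) and (b) from (c), which is \cite{CMT} Lemma 2.1 as restated in Lemma~\ref{CMT-2.1}. The key observation is that $f(t^6)$ can be written as $g(t^2)$ with $g(t)=f(t^3)$, or as $h(t^3)$ with $h(t)=f(t^2)$. So we can apply (c) once and then compare the result with the eigenspace decompositions belonging to the two intermediate cyclic extensions.

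Let $K=\Q(\omega)(s)$, $L=\Q(\omega)(t)$ with $t^6=s$, and let $\sigma$ be as in Lemma~\ref{CMT-2.1}. Put $V=E_{f(s)}(L)\otimes\Q=\bigoplus_{n=0}^5 V_{(-\omega^2)^n}$, where $V_{(-\omega^2)^n}\cong E_{s^nf(s)}(K)\otimes\Q$ has $\Z$-rank $R(t^nf(t))$. Set $u=t^2$ and $M=\Q(\omega)(u)$, the fixed field of $\sigma^3$.

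Since $\sigma^3(t)=-t$, the points of $E_{f(s)}(L)$ defined over $M$ span exactly the $\sigma^3$-fixed part of $V$; this uses that $(-)^{\sigma^3}$-invariants commute with $\otimes\Q$, which holds because averaging over the finite group is available rationally. The eigenvalue $(-\omega^2)^n$ satisfies $((-\omega^2)^n)^3=(-1)^n$, so the fixed part is $V_1\oplus V_{\omega^4}\oplus V_{\omega^8}$, i.e.\ $n\in\{0,2,4\}$. On the other hand $E_{f(s)}(M)=E_{f(u^3)}(\Q(\omega)(u))$. Hence
\[
R(f(t^3))=R(f(t))+R(t^2f(t))+R(t^4f(t)),
\]
which is (a).

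For (b) we repeat the argument with $v=t^3$, $N=\Q(\omega)(v)$, the fixed field of $\sigma^2$. We have $((-\omega^2)^n)^2=\omega^{4n}=\omega^n$, which equals $1$ exactly when $n\in\{0,3\}$. So $E_{f(v^2)}(N)\otimes\Q=V_1\oplus V_{(-\omega^2)^3}$, and
\[
R(f(t^2))=R(f(t))+R(t^3f(t)).
\]
(One could instead combine (c) with (a) applied to $g$ and then cancel, but the fixed-field argument is more direct.)

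The main obstacle is checking that the identification in Lemma~\ref{CMT-2.1} is compatible with the intermediate fields. The subspace of $V$ fixed by $\sigma^3$ must really be $E(M)\otimes\Q$, which is Galois descent for points and holds since $E$ is defined over $K\subseteq M$. The eigenvalue bookkeeping must also match the twisting exponents. Both are routine once we note that $\sigma$ acts on points coordinatewise and that $\Gal(L/M)=\langle\sigma^3\rangle$ and $\Gal(L/N)=\langle\sigma^2\rangle$.
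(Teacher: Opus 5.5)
Your proposal is correct and is essentially the paper's proof. Both arguments take $V=E_{f(t^6)}(\Q(\omega)(t))\otimes\Q$ with the eigenspace decomposition of Lemma~\ref{CMT-2.1}, and identify $E_{f(t^3)}$ (via $t\mapsto t^2$) and $E_{f(t^2)}$ (via $t\mapsto t^3$) with the $\sigma^3$-fixed part $V_1\oplus V_{\zeta_6^2}\oplus V_{\zeta_6^4}$ and the $\sigma^2$-fixed part $V_1\oplus V_{\zeta_6^3}$, respectively. Your remarks on averaging (so that taking invariants commutes with $\otimes\Q$) and on Galois descent make explicit what the paper leaves implicit. The parenthetical cancellation alternative is not needed and, as sketched, would not by itself justify (b), so you can drop it.
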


\begin{proof}

    (c) This is stated exactly in \cite{CMT} Lemma 2.1.

    (a)(b) Let $L=\Q(\omega)(t)$, $K=\Q(\omega)(t^6)$, and fix a generator $\sigma$ of $\Gal(L/K)$ such that $\sigma(t)=-\omega^2 t$. Let $V=E_{f(t^6)}(L)\otimes_{\Z}\Q$. By Lemma ~\ref{CMT-2.1}, $\sigma$ acts $\Q(\omega)$-linearly on the $\Q(\omega)$-vector space $V$, and $V=V_1 \oplus V_{\zeta_6} \oplus \dots \oplus V_{\zeta_6^5}$.

    (a) We embed 
    $$E_{f(t^3)}(\Q(\omega)(t)) \hookrightarrow E_{f(t^6)}(\Q(\omega)(t)) \quad \text{ by } \quad (x(t),y(t)) \mapsto (x(t^2), y(t^2)).$$
    This map is well-defined and injective, and preserves the complex multiplication and the group structure (by observing the addition formula on elliptic curves).
    The image of this map is exactly $E_{f(t^6)}(\Q(\omega)(t^2))$.

    Let $v \in V$. Since $\langle \sigma^3 \rangle = \Gal(L/\Q(\omega)(t^2))$, 
    we have 
    $$v \in E_{f(t^6)}(\Q(\omega)(t^2))\otimes_{\Z}\Q \iff \sigma^3(v)=v.$$
    Let $v=\sum_{i=0}^5 c_i v_i$, where $c_i \in \Q$, $v_i \in V_{\zeta_6^i}$. 
    Then $\sigma^3(v)=\sum_{i=0}^5 (-1)^i c_i v_i$. 
    Therefore, $$\sigma^3(v)=v \iff v \in V_1 \oplus V_{\zeta_6^2} \oplus V_{\zeta_6^4}.$$
    By Lemma ~\ref{CMT-2.1}, the vector spaces $V_1$, $V_{\zeta_6^2},$ and $V_{\zeta_6^4}$ are identified with $E_{f(t)}(L)\otimes_{\Z}\Q, E_{t^2f(t)}(L)\otimes_{\Z}\Q, E_{t^4f(t)}(L)\otimes_{\Z}\Q$ respectively, which completes the proof of part (a).  

    (b) This time we embed 
    $$E_{f(t^2)}(\Q(\omega)(t)) \hookrightarrow E_{f(t^6)}(\Q(\omega)(t)) \quad \text{ by } \quad (x(t),y(t)) \mapsto (x(t^3), y(t^3)).$$
    Again, this map is well-defined and injective, and preserves the complex multiplication and the group structure. The image of this map is exactly $E_{f(t^6)}(\Q(\omega)(t^3))$.

    Let $v \in V$. 
    Since $\langle \sigma^2 \rangle = \Gal(L/\Q(\omega)(t^3))$, 
    we have 
    $$v \in E_{f(t^6)}(\Q(\omega)(t^3))\otimes_{\Z}\Q \iff \sigma^2(v)=v.$$
    Let $v=\sum_{i=0}^5 c_i v_i$, where $c_i \in \Q$, $v_i \in V_{\zeta_6^i}$. 
    Then $\sigma^2(v)=\sum_{i=0}^5 \omega^i c_i v_i$. 
    Therefore, $$\sigma^2(v)=v \iff v \in V_1 \oplus V_{\zeta_6^3}.$$
    By Lemma ~\ref{CMT-2.1}, the vector spaces $V_1$ and $V_{\zeta_6^3}$ are identified with $E_{f(t)}(L)\otimes_{\Z}\Q, E_{t^3f(t)}(L)\otimes_{\Z}\Q$, which completes the proof of part (b). 
\end{proof}

Now we prove some basic properties that will also be used later in our paper. 

\begin{lem} \label{basics}
    Let $f(t) \in \Q(t)$. We have:
    
    (a) $R(f(t))=R(t^{6n}f(t))$ for any $n \in \Z$;
    
    (b) $R(f(t))=R(f(1/t))$;
    
    (c) $R(f(t))=R(t^6f(1/t))$. 
\end{lem}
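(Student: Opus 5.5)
Each of the three parts comes from an explicit birational change of variables that carries $E_{f(t)}$ over $\Q(\omega)(t)$ to the curve on the other side. Such a map preserves the group law and commutes with the complex multiplication $\omega$, so it gives an isomorphism of $\Z[\omega]$-modules $E_{f(t)}(\Q(\omega)(t)) \cong E_{g(t)}(\Q(\omega)(t))$, and in particular $R(f(t))=R(g(t))$.

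For (a), I use the substitution $(x,y)\mapsto (x/t^{2n}, y/t^{3n})$. If $y^2=x^3+t^{6n}f(t)$, then dividing through by $t^{6n}$ gives $(y/t^{3n})^2=(x/t^{2n})^3+f(t)$. The inverse is $(x,y)\mapsto(t^{2n}x,t^{3n}y)$. Both maps are defined over $\Q(\omega)(t)$ and send $O$ to $O$. They are isomorphisms of Weierstrass curves over the function field, of the form $(x,y)\mapsto(u^2x,u^3y)$ with $u=t^{-n}$, so they are group isomorphisms. They also commute with $(x,y)\mapsto(\omega x,y)$, because both act by scaling the $x$-coordinate. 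Hence $R(f(t))=R(t^{6n}f(t))$.

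For (b), I use the field automorphism $\phi$ of $\Q(\omega)(t)$ that fixes $\Q(\omega)$ and sends $t\mapsto 1/t$. Applying $\phi$ coordinatewise gives a bijection $E_{f(t)}(\Q(\omega)(t))\to E_{f(1/t)}(\Q(\omega)(t))$. The addition formulas are rational expressions with coefficients in $\Q(\omega)$, and $\phi$ fixes $\Q(\omega)$, so this bijection respects the group law; it visibly commutes with $\omega$. Therefore $R(f(t))=R(f(1/t))$.

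For (c), I combine the two previous parts: $R(f(t))=R(f(1/t))=R(t^6 f(1/t))$, using (b) and then (a) with $n=1$ applied to the function $f(1/t)\in\Q(t)$.

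There is no real obstacle. The only point that needs checking is that these maps are group homomorphisms and not merely bijections of point sets. For (a) this holds because the map is an isomorphism of Weierstrass models. For (b) it holds because the addition law is given by universal polynomial formulas that are compatible with any field automorphism fixing the coefficients.
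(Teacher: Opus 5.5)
Your proof is correct and follows the paper's own argument: for (a) the scaling map $(x,y)\mapsto(t^{2n}x,t^{3n}y)$ (you work with its inverse), for (b) the coordinatewise substitution $t\mapsto 1/t$, and for (c) the combination of the two. Your justification that these maps are group isomorphisms commuting with $\omega$ is, if anything, more explicit than the paper's: for (a) you note they are Weierstrass changes of variables with $u=t^{-n}$, and for (b) that the addition law is transported by a field automorphism fixing $\Q(\omega)$.
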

\begin{proof}
    (a) We define a map $E_{f(t)}(\Q(\omega)(t)) \to E_{t^{6n}f(t)}(\Q(\omega)(t))$ by $(x(t),y(t)) \mapsto (t^{2n}x(t),\allowbreak t^{3n}y(t))$. 
    This is a bijection and is also an isogeny between the two elliptic curves. Therefore $R(f(t))=R(t^{6n}f(t))$.

    (b) We define a map $E_{f(t)}(\Q(\omega)(t)) \to E_{f(1/t)}(\Q(\omega)(t))$ by $(x(t),y(t)) \mapsto (x(1/t), y(1/t))$. 
    This map is a bijection and preserves the complex multiplication and the group structure (by observing the addition formula). The same is true for its inverse. 
    Therefore $R(f(t))=R(f(1/t))$.

    (c) By part (a) and part (b), we have $R(f(t)) = R(f(1/t))=R(t^6f(1/t))$.
\end{proof}

\section{Low degree results} \label{section-Bremner}

Bremner \cite{Br} proved a formula for $r(f(t))$ for all $f(t) \in \Q[t]$ with $1 \leq \deg(f) \leq 3$.  
Their results are formulated in terms of the ``reduced form'' of $f(t)$ and an ``equivalence between polynomials'', whose definitions will be cited below. In this section, we cite their results and rewrite them explicitly in terms of the coefficients of $f(t)$. Note that any definition in this section will not be used in other sections.

\begin{prop} (\cite{Br} Theorem 1.7) \label{Br-1.7}
    Let $f(t) \in \Q[t]$ have degree $1$. Then $r(f(t))=0$.
\end{prop}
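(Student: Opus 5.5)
Write $f(t)=at+b$ with $a\neq 0$. The plan is to reduce to a single model and then show that even the geometric rank vanishes; this bounds the arithmetic rank from above. Since $u=at+b$ is an affine change of the parameter defined over $\Q$, we have $\Q(t)=\Q(u)$. The substitution $t\mapsto (u-b)/a$ identifies $E_f(\Q(t))$ with $E_u(\Q(u))$, where $E_u: y^2=x^3+u$, and it preserves the group law. It therefore suffices to prove $\rk_{\Z}E_u(\bQ(u))=0$, since $E_u(\Q(u))\subseteq E_u(\bQ(u))$.

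Next I compute the singular fibres of the elliptic surface attached to $E_u$. The discriminant is a constant multiple of $u^2$.
\begin{enumerate}
\item At $u=0$ the model $y^2=x^3+u$ is minimal with $v(\Delta)=2$, giving a fibre of Kodaira type II. This fibre is irreducible.
\item At $u=\infty$, take $s=1/u$ and rescale by $(x,y)\mapsto (s^{-2}x,s^{-3}y)$. This gives $y^2=x^3+s^5$, which is minimal with $v(\Delta)=10$, giving a fibre of type II$^*$ with $9$ components.
\end{enumerate}
There are no other singular fibres. The Euler numbers sum to $2+10=12$, so the surface is a rational elliptic surface.

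Then I apply the Shioda--Tate formula (\cite{SS} Section 7.2), as the introduction already notes. For a rational elliptic surface the Picard number is $10$. Hence
$$\rk_{\Z}E_u(\bQ(u)) = 10-2-\sum_v (m_v-1) = 8-(0+8)=0.$$
So $r(f(t))=0$. The only step needing care is the fibre at infinity, namely checking that the rescaled model is minimal and that $v(\Delta)=10$ with the additive reduction of type II$^*$. This follows from Tate's algorithm, since $v(a_6)=5<6$.

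As a backup avoiding Shioda--Tate, I would argue directly. On a rational elliptic surface with this model, every section has polynomial coordinates with $\deg x\le 2$ and $\deg y\le 3$. If $x,y\in\bQ[u]$ satisfy $y^2-x^3=u$, compare degrees. If $\deg x\ge 1$, the leading terms of $y^2$ and $x^3$ must cancel, so $2\deg y=3\deg x$. This forces $\deg x=2$ and $\deg y=3$, that is, $x=c^2(u-\alpha)^2+\dots$ and $y=c^3u^3+\dots$. Matching coefficients then leads to a contradiction with $y^2-x^3$ having degree exactly $1$. If instead $x$ is constant, then $y^2=x^3+u$ is impossible because $y^2$ cannot have degree $1$. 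Either way there are no non-torsion sections, which again gives rank $0$.
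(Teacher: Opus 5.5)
Your proof is correct and is essentially the paper's: the proposition is only cited from \cite{Br} there, but Remark~\ref{alt-pf-(2,1,0)} (the case $A=0$) proves it exactly as you do, by finding a type II fibre at the root and a type II$^*$ fibre at $\infty$ and applying the Shioda--Tate formula to get geometric rank $0$. Your backup argument overstates one point: not every section of a rational elliptic surface has polynomial coordinates with $\deg x\le 2$, $\deg y\le 3$ (only sections disjoint from $O$ do), so it would also need Shioda's theorem that such integral sections generate the Mordell--Weil group, and the ``matching coefficients'' step should be replaced by a real argument such as Davenport's bound $\deg(x^3-y^2)\ge \frac{1}{2}\deg x+1$.
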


Now we move on to degree $2$.

\begin{defn} (\cite{Br} Section 1)
     Suppose that $f(t)=at^2+bt+c$ where $a \neq 0$ and $a,b,c \in \Q$. By translating $t$ by a rational number, we can achieve $b=0$. By scaling $t$ by a rational number, we can make $a$ square-free. We define the reduced form of $f(t)$ to be the polynomial after this change of variables.
    
    We define the equivalence between reduced forms by $at^2+c \sim at^2+c\alpha^6$ for $\alpha \in \Q\backslash\{0\}$. Note that $y^2=x^3+(at^2+c)$ and $y^2=x^3+(at^2+c\alpha^6)$ for some $\alpha\in\Q\backslash\{0\}$ define the same curve over $\Q$. We define two polynomials to be equivalent if their reduced forms are equivalent.
\end{defn}

\begin{prop} (\cite{Br} Theorem 1.5) \label{Br-1.5}
    Let $f(t) \in \Q[t]$ have degree $2$. Then $r(f(t))=1$ if and only if there exists $\lambda \in \Q\backslash\{0\}$ such that either $f(t) \sim t^2+\lambda^3$ or $f(t) \sim -3t^2+\lambda^3$. Otherwise, $r(f(t))=0$.
\end{prop}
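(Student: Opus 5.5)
The plan is to compute the geometric Mordell--Weil group of $E_f$ over $\bQ(t)$ explicitly, find a generator up to finite index, and then read off the Galois invariants. Lemma~\ref{omega} reduces the problem to computing $R(f(t))=\rk_{\Z}E_f(\Q(\omega)(t))$.

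\textbf{Step 1: reduce the form.} After a rational translation and scaling of $t$, and replacing $c$ by $c\alpha^6$ (which does not change the curve over $\Q$), we may assume $f(t)=at^2+c$ with $a\neq 0$ squarefree. We may also assume $c\neq 0$, since $f=at^2$ is trivial. The surface $y^2=x^3+at^2+c$ is a rational elliptic surface. Its singular fibers are:
\begin{itemize}
\item two fibers of type II at the roots of $at^2+c$;
\item a fiber of type IV$^*$ at $t=\infty$, because $t^6f(1/t)=ct^6+at^4$ vanishes to order $4$.
\end{itemize}
The Euler numbers sum to $2+2+8=12$, as they must for a rational elliptic surface. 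The Shioda--Tate formula then gives $\rk_{\Z}E_f(\bQ(t))=8-6=2$, so $E_f(\bQ(t))\otimes\Q$ is a one-dimensional $\Q(\omega)$-vector space.

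\textbf{Step 2: exhibit a geometric generator.} Look for sections with constant $x=k$. We need $at^2+c+k^3$ to be a square in $\bQ(t)$, which forces $k^3=-c$. This gives the point $P=(-\sqrt[3]{c},\sqrt{a}\,t)$. It is non-torsion: its canonical height is positive, or alternatively, torsion sections on a rational surface with these fibers are excluded directly. Hence $P$ spans $E_f(\bQ(t))\otimes\Q$ over $\Q(\omega)$.

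\textbf{Step 3: compute the Galois action.} For $\sigma\in\Gal(\bQ/\Q(\omega))$, we have $\sigma(\sqrt[3]{c})=\omega^{j}\sqrt[3]{c}$ and $\sigma(\sqrt a)=\pm\sqrt a$. Therefore
\[
\sigma(P)=\pm\,\omega^{-j}P,
\]
so $\sigma(P)=\chi(\sigma)P$ for a character $\chi:\Gal(\bQ/\Q(\omega))\to\mu_6\subset\Z[\omega]^\times$. Since this action is $\Q(\omega)$-linear on the one-dimensional space, the Galois descent identification
\[
E_f(\Q(\omega)(t))\otimes\Q=\bigl(E_f(\bQ(t))\otimes\Q\bigr)^{\Gal(\bQ/\Q(\omega))}
\]
shows this space is the whole line if $\chi$ is trivial, and $0$ otherwise. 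Thus $R(f)=2$ exactly when $\sqrt a\in\Q(\omega)$ and $\sqrt[3]{c}\in\Q(\omega)$, and otherwise $R(f)=0$. By Lemma~\ref{omega}, $r(f)=R(f)/2$, which is $1$ or $0$ respectively.

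\textbf{Step 4: translate the conditions.} For squarefree $a$, the condition $\sqrt a\in\Q(\omega)$ means $a\in\{1,-3\}$. For $c\in\Q$, the condition $\sqrt[3]{c}\in\Q(\omega)$ means $c=\lambda^3$ with $\lambda\in\Q$: the cube roots of $c$ differ by powers of $\omega$, and $\Q(\omega)/\Q$ has degree $2$, which is coprime to $3$. This yields exactly $f\sim t^2+\lambda^3$ or $f\sim -3t^2+\lambda^3$.

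\textbf{Main obstacle.} The delicate point is justifying in Step 2 that $P$ generates a finite-index submodule, i.e. that the geometric rank really is $2$. This rests on correctly identifying the fiber at infinity and on the Galois-descent identification used in Step 3.
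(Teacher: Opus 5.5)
Your route is the paper's alternative argument in Remark~\ref{alt-pf-(2,1,0)}; for the statement itself the paper only cites Bremner. The ingredients match: Shioda--Tate gives geometric $\Z[\omega]$-rank $1$ when the reduced form $at^2+c$ has $c\neq 0$, the section $P=(-\sqrt[3]{c},\sqrt{a}\,t)$ is non-torsion, and the rank-one case of Lemma~\ref{rank-criteria} reduces everything to whether $P$ is fixed by $\Gal(\bQ/\Q(\omega))$. Your character $\chi$, with invariants computed in $E_f(\bQ(t))\otimes\Q$, is the same criterion, and Steps 2--4 are fine. Two small corrections there:
\begin{itemize}
\item You should have $\sigma(P)=\pm\omega^{j}P$, not $\pm\omega^{-j}P$. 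This is harmless.
\item You should state why the height is positive. $P$ has polynomial coordinates in both charts (at infinity it is $(-\sqrt[3]{c}\,s^2,\sqrt{a}\,s^2)$), so $(P\cdot O)=0$ and $\langle P,P\rangle=2-\mathrm{contr}_\infty(P)\geq 2-\tfrac43>0$.
\end{itemize}

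The one real gap is in Step 1, where you discard $c=0$ because ``$f=at^2$ is trivial.'' It is not trivial in the paper's sense, since $at^2$ is not of the form $\kappa t^{6m}$. More importantly, the proposition covers every degree-$2$ polynomial, including $f=a(t-t_0)^2$, and asserts $r(f)=0$ there. So this case has to be proved, not excluded; even a genuinely constant curve could have positive rank over $\Q(t)$, so calling it trivial would not settle it. The fix is one line, as in the paper's remark. The surface $y^2=x^3+at^2$ has a fibre of type $\mathrm{IV}$ at $t=0$ and one of type $\mathrm{IV}^*$ at $t=\infty$ (since $t^6f(1/t)=at^4$), with Euler numbers $4+8=12$. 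Shioda--Tate then gives geometric rank $10-2-2-6=0$, hence $r(f)=0$.
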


Now we move on to degree $3$. Bremner split the case of degree $3$ into two subcases depending on whether $f(t)$ has a repeated root, and we will only cite and use their results when a repeated root exists.

\begin{defn} (\cite{Br} Section 1)
    Let $f(t) \in \Q[t]$ have degree $3$ and possess a squared linear factor in $\Q[t]$. By translation and scaling, $f(t)$ may be taken in the form $f(t)=t^2(at+b)$, $a,b \in \Q$, $a<0$ and $a$ is cube-free. 

    We define the equivalence between reduced forms by $t^2(at+b) \sim t^2(at+b\alpha^2)$ for $\alpha \in \Q\backslash\{0\}$. Note that $y^2=x^3+t^2(at+b)$ and $y^2=x^3+t^2(at+b\alpha^2)$ for some $\alpha\in\Q\backslash\{0\}$ define the same curve over $\Q$. We define two polynomials to be equivalent if their reduced forms are equivalent.
\end{defn}

\begin{prop} (\cite{Br} Theorem 1.3) \label{Br-1.3}
    Let $f(t) \in \Q[t]$ have degree $3$ and possess a squared linear factor in $\Q[t]$. 
    Then $r(f(t))=1$ if and only if there exists $\lambda \in \Q\backslash\{0\}$ such that either $f(t) \sim -t^3+\lambda^2 t^2$ or $f(t) \sim -t^3-3\lambda^2 t^2$. Otherwise, $r(f(t))=0$.
\end{prop}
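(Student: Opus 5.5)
We treat the cited statement directly, using only the geometry of the rational elliptic surface $y^2=x^3+t^2(at+b)$ and the results of Section~\ref{section-decomposition}. After the change of variables in the definition we may assume $f(t)=t^2(at+b)$ with $a<0$ cube-free and $b\neq 0$.

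\textbf{Step 1: geometric rank.} We first compute $\rk_{\Z}E_f(\bQ(t))$ by the Shioda--Tate formula for this rational elliptic surface.
\begin{itemize}
\item At $t=0$, $f$ vanishes to order $2$, giving a fibre of type $IV$, which contributes $2$.
\item At $t=-b/a$, $f$ has a simple zero, giving a fibre of type $II$, which contributes $0$.
\item At $t=\infty$ we look at $t^6f(1/t)=t^3(bt+a)$, which vanishes to order $3$. This gives a fibre of type $I_0^*$, which contributes $4$.
\end{itemize}
Hence $\rk_{\Z}E_f(\bQ(t))=8-2-4=2$. Equivalently, $V=E_f(\bQ(t))\otimes_{\Z}\Q$ is a $1$-dimensional $\Q(\omega)$-vector space under the complex multiplication.

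\textbf{Step 2: an explicit generator.} We look for a point with $x$ linear in $t$ and $y$ linear in $t$. Taking $x=-\sqrt[3]{a}\,t$ gives
\[
x^3+f(t)=-at^3+at^3+bt^2=bt^2,
\]
so $P=(-\sqrt[3]{a}\,t,\ \sqrt{b}\,t)\in E_f(\bQ(t))$. One checks that $P$ is non-torsion: it is non-constant and not $2$- or $3$-torsion, and the torsion here is trivial. Therefore $P$ spans $V$ over $\Q(\omega)$. This is the same point that appears in Theorem~\ref{arith-gen-(6,3,0)}(c) in the case $A=0$.

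\textbf{Step 3: Galois action.} Let $\sigma\in\Gal(\bQ/\Q(\omega))$. Then $\sigma(\sqrt[3]{a})=\omega^{k}\sqrt[3]{a}$ and $\sigma(\sqrt b)=\varepsilon\sqrt b$ with $\varepsilon=\pm1$. Consequently
\[
P^\sigma=\varepsilon\,[\omega^{k}]P,
\]
so $\sigma$ acts on the line $V$ through the character $\chi(\sigma)=\varepsilon\omega^k\in\mu_6$.

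The subspace $E_f(\Q(\omega)(t))\otimes\Q$ is the $\sigma$-invariant part of $V$ (taking invariants commutes with $\otimes\Q$ since the Galois action factors through a finite quotient). It is therefore $V$ if $\chi$ is trivial and $0$ otherwise. Since $-1$ is not a power of $\omega$, $\chi$ is trivial exactly when both $\sqrt[3]{a}\in\Q(\omega)$ and $\sqrt b\in\Q(\omega)$.

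Combining this with Lemma~\ref{omega}, $r(f)=R(f)/2$ equals $1$ in that case and $0$ otherwise.

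\textbf{Step 4: translate into the normal form.} For rational $a$, $\sqrt[3]{a}\in\Q(\omega)$ forces $a\in\Q^{*3}$, because $\Q(\omega)/\Q$ has degree $2$, prime to $3$. With $a<0$ cube-free this means $a=-1$. Also, $\sqrt b\in\Q(\omega)$ means $b=\lambda^2$ or $b=-3\lambda^2$ with $\lambda\in\Q\backslash\{0\}$. This is exactly $f\sim -t^3+\lambda^2t^2$ or $f\sim -t^3-3\lambda^2t^2$. The equivalence $b\mapsto b\alpha^2$ preserves both conditions, so the criterion is independent of the chosen representative.

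\textbf{Main obstacle.} The step needing most care is the claim in Step 2 that $P$ generates $V$ over $\Q(\omega)$, rather than merely $\bQ$-points being abundant. It suffices that $P$ is non-torsion, because $\dim_{\Q(\omega)}V=1$; the real work is confirming that the fibre-type count in Step 1 is correct, in particular the type at infinity. As a fallback, $P$ can be shown non-torsion by computing its canonical height with the standard correction terms for the $IV$ and $I_0^*$ fibres.
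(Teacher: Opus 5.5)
Your route is the same as the paper's own independent argument. The paper only cites Bremner for this statement, but Remark~\ref{alt-pr-(3,2)} reproves it in the coefficient form of Proposition~\ref{result-(3,2)}, whose proof contains the translation you do in Step 4. It uses the same ingredients: Shioda--Tate applied to the fibres $IV$, $II$, $I_0^*$, the section $(-\sqrt[3]{a}\,t,\sqrt{b}\,t)$, and the rank criterion of Lemma~\ref{rank-criteria}. Your Step 3 works in $V=E_f(\bQ(t))\otimes_{\Z}\Q$, with $\sigma$ acting on the line by the character $\varepsilon\omega^k$. This is a slightly cleaner version of that criterion, since it does not need $E_f(\bQ(t))$ to be a free $\Z[\omega]$-module.

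Two points need repair.

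\emph{First, the case $b=0$.} You assume $b\neq 0$ without comment. But the triple-root case $f=a(t-r)^3$, whose reduced form is $at^3$, is covered by the hypothesis. The paper actually invokes this proposition for $f=At^3$ in the proof of Proposition~\ref{result-(3,2)}. In this case Step 2 breaks down, because $P=(-\sqrt[3]{a}\,t,0)$ is a $2$-torsion point. You need a separate observation: the fibres at $t=0$ and $t=\infty$ are both of type $I_0^*$, so the geometric rank is $8-4-4=0$ and $r(f)=0$. This agrees with the statement, since $\lambda\neq 0$ excludes $b=0$. Remark~\ref{alt-pr-(3,2)} handles this case exactly that way.

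\emph{Second, non-torsion of $P$.} The claim that ``the torsion here is trivial'' is asserted rather than proved. The height computation you list as a fallback is really the argument, and it is the one the paper uses:
\begin{itemize}
\item $P$ has polynomial coordinates in $t$.
\item In the chart at infinity it becomes $(-\sqrt[3]{a}\,s,\sqrt{b}\,s^2)$ on $y^2=x^3+as^3+bs^4$, again polynomial, so $(P\cdot O)=0$.
\item $P$ passes through the singular point $(0,0)$ of the Weierstrass fibre both at $t=0$ (type $IV$) and at $s=0$ (type $I_0^*$).
\end{itemize}
Hence $\langle P,P\rangle=2-\tfrac{2}{3}-1=\tfrac{1}{3}\neq 0$. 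With these two additions your proof is complete.
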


Now we rewrite Bremner's results explicitly in terms of the coefficients of $f(t)$.

\begin{prop} \label{result-(2,1,0)} 
    Let $f(t)=At^2+Bt+C$ where $A,B,C \in \Q$ and $(A,B)\neq(0,0)$. Then $r(f(t))=1$ if and only if all of the following hold: \begin{enumerate}
        \item[(i)] $A \neq 0$;
    
        \item[(ii)] $B^2-4AC \neq 0$;
    
        \item[(iii)] $\sqrt{A}\in \Q(\omega)$; and
    
        \item[(iv)] $\frac{B^2-4AC}{4A}$ is the cube of a rational number, which is equivalent to being a cube in $\Q(\omega)$.
    \end{enumerate}

    If any of (i)--(iv) does not hold, then $r(f(t))=0$.
\end{prop}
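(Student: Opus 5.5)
The plan is to reduce everything to Bremner's classification: Proposition~\ref{Br-1.7} for degree $1$ and Proposition~\ref{Br-1.5} for degree $2$. We then translate the condition ``$f(t)\sim t^2+\lambda^3$ or $f(t)\sim -3t^2+\lambda^3$'' into conditions (i)--(iv) by writing out the reduced form of $f$ explicitly. Throughout we put $D:=\frac{B^2-4AC}{4A}$ whenever $A\neq 0$.

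\emph{Case $A=0$.} Then $B\neq 0$, so $\deg f=1$ and $r(f(t))=0$ by Proposition~\ref{Br-1.7}. This is consistent with the statement, since (i) fails.

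\emph{Case $A\neq 0$.} The substitution $t\mapsto t-\frac{B}{2A}$ turns $f$ into $At^2-D$. Next write $A=am^2$ with $m\in\Q\setminus\{0\}$ and $a$ a squarefree integer, and substitute $t\mapsto t/m$. This gives the reduced form $at^2-D$; the scaling does not change the constant term.
\begin{itemize}
\item[$\bullet$] If (ii) fails, then $D=0$ and the reduced form is $at^2$. Bremner's equivalence multiplies the constant term by $\alpha^6$, so a zero constant term stays zero. Hence $at^2$ is not equivalent to $t^2+\lambda^3$ or $-3t^2+\lambda^3$ with $\lambda\neq 0$, and $r=0$ by Proposition~\ref{Br-1.5}.
\item[$\bullet$] If $D\neq 0$, then $at^2-D\sim t^2+\lambda^3$ holds iff $a=1$ and $-D=\lambda^3\alpha^6$ for some nonzero rationals $\lambda,\alpha$. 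Since $\lambda^3\alpha^6=(\lambda\alpha^2)^3$, and conversely any cube $\mu^3$ arises by taking $\lambda=\mu$, $\alpha=1$, this means: $A$ is a rational square and $-D$ (equivalently $D$) is a rational cube. Likewise $at^2-D\sim -3t^2+\lambda^3$ iff $a=-3$ and $D$ is a rational cube, i.e.\ $A\in -3(\Q^\times)^2$ and $D$ is a rational cube.
\end{itemize}

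It remains to see that the union of these two conditions is exactly (iii) together with (iv).
\begin{itemize}
\item[$\bullet$] \emph{Condition (iii).} For rational $A\neq 0$, the field $\Q(\sqrt A)$ has degree at most $2$. It lies in $\Q(\omega)=\Q(\sqrt{-3})$ iff $\Q(\sqrt A)=\Q$ or $\Q(\sqrt A)=\Q(\sqrt{-3})$. This happens iff $A\in(\Q^\times)^2$ or $A\in -3(\Q^\times)^2$, i.e.\ iff the squarefree part $a$ is $1$ or $-3$.
\item[$\bullet$] \emph{Condition (iv).} I must justify the claim that a rational $D$ is a cube in $\Q(\omega)$ iff it is a rational cube. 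The polynomial $x^3-D$ either has a rational root or is irreducible over $\Q$. If it is irreducible, any root generates a degree-$3$ extension of $\Q$, which cannot lie in the quadratic field $\Q(\omega)$. So if $D$ is a cube in $\Q(\omega)$, then $x^3-D$ has a rational root.
\end{itemize}

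The only delicate point is the bookkeeping of Bremner's equivalence relation. One must check that the reduced form is determined up to $\sim$: the squarefree part $a$ is canonical, and the constant $-D$ is unaffected by the scaling. One must also confirm that the $\alpha^6$ factor and the sign of $\lambda^3$ do not change whether $D$ is a cube, which follows because $-1$ and $\alpha^6$ are themselves cubes.
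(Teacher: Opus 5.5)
Your proof is correct and follows essentially the same route as the paper. You dispose of $A=0$ via Proposition~\ref{Br-1.7}, complete the square to get $At^2-\frac{B^2-4AC}{4A}$, and translate the criterion of Proposition~\ref{Br-1.5} into (ii)--(iv). The one difference is cosmetic: you prove the equivalences ($\sqrt{A}\in\Q(\omega)$ iff $A\in(\Q^\times)^2\cup -3(\Q^\times)^2$, and a rational cube in $\Q(\omega)$ is a rational cube) by degree arguments, whereas the paper expands $(a+b\sqrt{-3})^2$ and $(a+b\sqrt{-3})^3$ explicitly.
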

\begin{proof}
    Proposition~\ref{Br-1.7} says that if $A=0$ (then $B \neq 0$) then $r(f(t))=0$. So it suffices to assume that $A \neq 0$ and prove that $r(f(t))=1$ if and only if all of (ii), (iii) and (iv) hold.

    We know that \begin{align*}
        f\left( t-\frac{B}{2A} \right) = At^2 + \frac{4AC-B^2}{4A}.
    \end{align*}
    So $f(t)$ is equivalent to 
    $t^2+\lambda^3$ or $-3t^2+\lambda^3$ for some $\lambda \in \Q\backslash\{0\}$ if and only if all of the following hold:
    
    (i) $A$ or $-3A$ is the square of a rational number;
    
    (ii) $4AC-B^2 \neq 0$; and 
    
    (iii) $\frac{B^2-4AC}{4A}$ is the cube of a rational number (because multiplying $\alpha^6$ for $\alpha \in \Q\backslash\{0\}$ does not change whether this number is a rational cube or not).

    We note that $A$ or $-3A$ being the square of a rational number implies that $\sqrt{A} \in \Q(\omega)$, and conversely, if $\sqrt{A}=a+b\sqrt{-3}$ for $a,b \in \Q$, then $A=a^2-3b^2+2ab\sqrt{-3}$, which implies that $ab=0$. If $a=0$, then $-3A$ is a square of a rational number. If $b=0$, then $A$ is a square of a rational number.
    So $A$ or $-3A$ being the square of a rational number is equivalent to $\sqrt{A} \in \Q(\omega)$.

    To see that a rational number being a cube in $\Q(\omega)$ implies that it is a cube in $\Q$, let $(a+b\sqrt{-3})^3$ for $a,b \in \Q$ be this rational number. Then $(a+b\sqrt{-3})^3 = (a^3-9ab^2) + \sqrt{-3}(3a^2b-3b^3)$, which tells us that $3a^2b-3b^3 = 3b(a-b)(a+b)=0$. If $b=0$, then $a^3-9ab^2=a^3$ is the cube of a rational number. If $b=\pm a$, then $a^3-9ab^2=-8a^3=(-2a)^3$ is also the cube of a rational number. So a rational number is the cube of an element in $\Q(\omega)$ if and only if it is the cube of an element in $\Q$. 
    
    Therefore, by Proposition ~\ref{Br-1.5}, we complete the proof of Proposition ~\ref{result-(2,1,0)}.
\end{proof}

\begin{prop} \label{result-(3,2)} 
    Let $f(t)=At^3+Bt^2$ where $A,B \in \Q$ and $(A,B)\neq(0,0)$. Then $r(f(t))=1$ if and only if all of the following hold: \begin{enumerate}
        \item[(i)] $A,B \neq 0$;
        \item[(ii)] $A$ is a cube of a rational number (or equivalently, a cube in $\Q(\omega)$); and
        \item[(iii)] $\sqrt{B} \in \Q(\omega)$. 
    \end{enumerate}

    If any of (i)--(iii) does not hold, then $r(f(t))=0$.
\end{prop}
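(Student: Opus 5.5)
The plan is to reduce Proposition~\ref{result-(3,2)} to Bremner's Theorem 1.3 (Proposition~\ref{Br-1.3}), in the same way Proposition~\ref{result-(2,1,0)} was reduced to Proposition~\ref{Br-1.5}. First I dispose of the degenerate cases. If $A=0$, then $B\neq 0$ and $f(t)=Bt^2$. After the substitution $(x,y)\mapsto (x, y)$ and rescaling, this curve is isomorphic over $\Q(t)$ to $y^2=x^3+B$ via $(x,y)\mapsto(x/t^{2/3},\dots)$. More cleanly, use the degree-$2$ result Proposition~\ref{result-(2,1,0)} with $(A',B',C')=(B,0,0)$, whose discriminant vanishes, so $r=0$. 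If $B=0$, then $f=At^3$, and Lemma~\ref{basics}(c) gives $R(At^3)=R(t^6\cdot At^{-3})=R(At^3)$, which is useless. Instead, substitute $t\mapsto 1/t$ and multiply by $t^6$ as in Lemma~\ref{basics}, turning $At^3$ into $At^3$ again. Rather than that, I would just note that $x^3+At^3$ is isotrivial-trivial after the change $x=tX$: the curve $y^2=x^3+At^3$ becomes a twist of a constant curve by $t$, and its $\Q(t)$-points come from the constant curve twisted by a quadratic extension, which gives rank $0$. Alternatively, apply Proposition~\ref{Br-1.7}-type reasoning after $t\mapsto t^{-1}$. So I can assume $A,B\neq0$, and then $f$ has degree $3$ with the squared factor $t^2$, so Proposition~\ref{Br-1.3} applies.

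Next I compute the reduced form. Scaling $t\mapsto \mu t$ with $\mu\in\Q^\times$ gives $t^2(A\mu^3 t+B\mu^2)$. I choose $\mu$ so that $A\mu^3$ is negative and cube-free, and write $A\mu^3=a$ and $B\mu^2=b$. Equivalence allows $b\mapsto b\alpha^2$. The target forms $-t^3+\lambda^2t^2$ and $-t^3-3\lambda^2t^2$ have leading coefficient $-1$, so the reduced form is equivalent to one of them exactly when $a=-1$, i.e.\ $A$ is a rational cube, and when $b$ is, modulo squares, either a nonzero square or $-3$ times a nonzero square. Since $\mu^2$ is a square, this condition on $b$ is the same condition on $B$. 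I need to check that the equivalence classes are exactly the $\alpha^2$-orbits, and that the normalization of $a$ is canonical, which it is, since the cube-free negative representative of $A$ modulo cubes is unique.

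Finally, I translate the conditions. "$B$ or $-3B$ is a nonzero rational square" is equivalent to $\sqrt{B}\in\Q(\omega)\setminus\{0\}$, by the same computation $(a+b\sqrt{-3})^2\in\Q\Rightarrow ab=0$ used in the proof of Proposition~\ref{result-(2,1,0)}. "Rational cube equals cube in $\Q(\omega)$" was also shown there. The main obstacle I expect is the case $B=0$, $f=At^3$, because Bremner's statement presupposes a squared linear factor with a genuine linear cofactor. I would handle it by Lemma~\ref{reduction}(b) applied in reverse, or directly: $R(At^3)$ equals $R$ of the degree-$1$ polynomial $At$ after the transformation $t\mapsto 1/t$ together with Lemma~\ref{basics}(a), since $t^6\cdot A t^{-3}=At^3$ does not help, whereas $At^3\cdot t^{-6}\cdot$... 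I would settle it by the substitution $x=tX$, $y=t\,Y$, which over $\Q(\sqrt t)$ gives a constant curve. A point over $\Q(t)$ then yields a constant point twisted by the nontrivial character, hence one that is torsion, so $r=0$.
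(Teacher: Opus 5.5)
Your proposal is correct. For $A=0$ and for $A,B\neq 0$ it is the paper's argument:
\begin{itemize}
\item $Bt^2$ is dispatched by Proposition~\ref{result-(2,1,0)}, since the discriminant vanishes.
\item For $A,B\neq 0$ one applies Proposition~\ref{Br-1.3} and translates ``$A$ is a rational cube, and $B$ or $-3B$ is a nonzero rational square'' exactly as you do. Your normalization $t\mapsto\mu t$, giving $t^2(A\mu^3t+B\mu^2)$, only makes explicit what the paper leaves implicit.
\end{itemize}

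The one genuine difference is $B=0$. The paper reads $At^3$ as $t^2(At+0)$ and cites Proposition~\ref{Br-1.3} directly. This works because a form with $b=0$ stays at $b=0$ under $b\mapsto b\alpha^2$, so it is never equivalent to $-t^3+\lambda^2t^2$ or $-t^3-3\lambda^2t^2$ with $\lambda\neq0$.

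You instead view $y^2=x^3+At^3$ as the quadratic twist by $t$ of the constant curve $Z^2=X^3+A$, via $x=tX$, $y=t^{3/2}Z$.
\begin{itemize}
\item A $\Q(t)$-point is then a point $P$ of the constant curve over $\Q(\sqrt t)$ with $P^\sigma=-P$.
\item Since $\Q(\sqrt t)$ is a rational function field, $P$ is constant.
\item Hence $P^\sigma=P$, so $2P=O$ and the rank is $0$.
\end{itemize}
This is valid and self-contained, and it removes any dependence on whether Bremner's theorem is meant to cover the degenerate cofactor $b=0$. The price is a few extra lines; the paper's citation is shorter.

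Two other short routes exist inside the paper.
\begin{itemize}
\item Shioda--Tate, as in Remark~\ref{alt-pr-(3,2)}: the fibres at $t=0$ and $t=\infty$ are both of type $I_0^*$, so the geometric rank is $8-4-4=0$.
\item Your vague ``Lemma~\ref{reduction}(b)'' suggestion can be made precise. Taking $f=At^3$ there gives $R(At^6)=R(At^3)+R(At^6)$. By Lemma~\ref{basics}(a), $R(At^6)=R(A)$, which is finite by Mordell--Weil for the constant curve, so $R(At^3)=0$.
\end{itemize}

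When you write this up, do two things.
\begin{itemize}
\item State explicitly that a constant elliptic curve has only constant points over $\Q(\sqrt t)$. Your phrase ``constant point twisted by the nontrivial character'' compresses the whole argument into one line.
\item Delete the abandoned attempts. The map $(x,y)\mapsto(x/t^{2/3},\dots)$ is not defined over $\Q(t)$; it only makes the curve constant over $\Q(t^{1/3})$. The ``Proposition~\ref{Br-1.7}-type reasoning after $t\mapsto t^{-1}$'' cannot work: combining $t\mapsto 1/t$ with multiplication by $t^{6n}$ only sends the exponent $3$ to $\pm3+6n$, never to $1$.
\end{itemize}
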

\begin{proof}
    If $A=0$, then $f(t)=Bt^2$, and Proposition ~\ref{result-(2,1,0)} tells us that $r(f(t))=0$. 

    If $B=0$, then $f(t)=At^3$, and Proposition ~\ref{Br-1.3} tells us that $r(f(t))=0$.

    If $A,B \neq 0$, then Proposition ~\ref{Br-1.3} tells us that $r(f(t))=1$ if and only if both of the following hold: (i) $A$ is the cube of a rational number and (ii) $B$ or $-3B$ is the square of a (nonzero) rational number (because multiplying nonzero $\alpha^2$ to the degree-$2$ term does not change this property).

    In the proof of Proposition ~\ref{result-(2,1,0)}, we have proved that a rational number is a cube in $\Q$ if and only if it is a cube in $\Q(\omega)$. We have also proved that for a rational number $B$, we have $\sqrt{B} \in \Q(\omega)$ if and only if $B$ or $-3B$ is the square of a rational number. This completes the proof of Proposition ~\ref{result-(3,2)}.
\end{proof}

\begin{remark}
    The results in this section can also be proved alternatively by the method in our paper or in \cite{Kl}. See Remark ~\ref{alt-pf-(2,1,0)} and ~\ref{alt-pr-(3,2)}.
\end{remark}

\section{Proof of theorem \ref{result-(6,3,0)}} \label{section-(6,3,0)}

\begin{proof}

(a) By Lemma ~\ref{reduction}(a), we have 
\begin{align*}
R(At^6+Bt^3+C) & = R(At^2+Bt+C) + R(At^4+Bt^3+Ct^2) + R(At^6+Bt^5+Ct^4).
\end{align*}
Then by Lemma ~\ref{omega}, we complete the proof of part (a).

(b) By Proposition ~\ref{result-(2,1,0)}, we complete the proof of part (b).

(d) By Lemma ~\ref{omega} and Lemma ~\ref{basics}(c), we have 
\begin{align*}
    \rk_{\Z}E_3(\Q(t)) = \frac{1}{2} R(At^6+Bt^5+Ct^4)  = \frac{1}{2} R(Ct^2+Bt+A) = r(Ct^2+Bt+A).
\end{align*}
Then by Proposition ~\ref{result-(2,1,0)}, we complete the proof of part (d).

(c) Now we are going to prove the main part of Theorem ~\ref{result-(6,3,0)}, namely, part (c). In the first subsection below, we will prove the degenerate cases, namely, the first three cases in the statement of part (c). In the second subsection below, we will prove the non-degenerate case, namely, the last case in the statement of part (c).

\subsection{Degenerate Cases} \label{subsection-degen}

Now we prove the first three cases in part (c) of Theorem ~\ref{result-(6,3,0)}.

\begin{proof}

(i) If $A=0$, then $f(t)=Bt^3+Ct^2$. By Proposition ~\ref{result-(3,2)}, we complete the proof of the first case in part (c) of Theorem ~\ref{result-(6,3,0)}.

(ii) If $C=0$, then $f(t)=At^4+Bt^3$, and by Lemma ~\ref{basics}(c), we have $R(f(t))=R(Bt^3+At^2)$. By Proposition ~\ref{result-(3,2)}, we complete the proof of the second case in part (c) of Theorem ~\ref{result-(6,3,0)}.

(iii) If $A,C \neq 0$, $B^2-4AC=0$, then $f(t) = A t^2 (t+\frac{B}{2A})^2$. Then $f(t-\frac{B}{4A})= A (t - \frac{B}{4A})^2 (t + \frac{B}{4A})^2
=A (t^2 - \frac{B^2}{16A^2})^2=A (t^4 - \frac{B^2}{8A^2} t^2 + \frac{B^4}{256 A^4})
=A t^4 - \frac{B^2}{8A} t^2 + \frac{B^4}{256 A^3}$. 
By Lemma ~\ref{reduction}(b) and Lemma ~\ref{basics}(c), we have 
\begin{align*}
    R\left(A t^4 - \frac{B^2}{8A} t^2 + \frac{B^4}{256 A^3}\right) & = R\left(A t^2 - \frac{B^2}{8A} t + \frac{B^4}{256 A^3}\right) + R\left(A t^5 - \frac{B^2}{8A} t^4 + \frac{B^4}{256 A^3} t^3\right)\\
    & = R\left(A t^2 - \frac{B^2}{8A} t + \frac{B^4}{256 A^3}\right) + R\left(\frac{B^4}{256 A^3} t^3 - \frac{B^2}{8A} t^2 + At\right).
\end{align*} 

Since $A t^2 - \frac{B^2}{8A} t + \frac{B^4}{256 A^3} = A\left( t-\frac{B^2}{16A^2} \right)^2$, by Proposition ~\ref{result-(2,1,0)}, we have $R\left(A t^2 - \frac{B^2}{8A} t + \frac{B^4}{256 A^3}\right)\allowbreak=0$. 

Let \begin{align*}
    g(t)  = \frac{B^4}{256 A^3} t^3 - \frac{B^2}{8A} t^2 + At
     = At\left( \frac{B^2}{16A^2}t-1  \right)^2
\end{align*}
Then \begin{align*}
    g\left(t+\frac{16A^2}{B^2}\right) 
    = A \left(t+\frac{16A^2}{B^2}\right) \frac{B^4}{256A^4} t^2
    =\frac{B^4}{256A^3} t^3 + \frac{B^2}{16A} t^2.
\end{align*}
So $$R(f(t)) = R\left(  \frac{B^4}{256A^3} t^3 + \frac{B^2}{16A} t^2 \right).$$ 
Note that $\frac{B^3}{512A^3}$ is a rational cube and that $\frac{B^2}{16}$ is a rational square. Therefore, by Proposition ~\ref{result-(3,2)}, we have $R(f(t))=2r(f(t))=2$ if and only if $\sqrt[3]{2B} \in \Q(\omega)\backslash\{0\}$ and $\sqrt{A} \in \Q(\omega)\backslash\{0\}$. We also have $R(f(t))=0$ otherwise.

This completes the proof of the first three cases in part (c) of Theorem ~\ref{result-(6,3,0)}.
\end{proof}

\subsection{Non-degenerate case} \label{subsection-non-degen}

Now we prove the last case in part (c) of Theorem ~\ref{result-(6,3,0)}. Therefore, we assume that $A,C \neq 0$, $B^2-4AC \neq 0$ throughout this subsection.

In Lemma~\ref{lem-lin-alg}, we prove a simple linear algebra fact that will be used in the next two lemmas. In Lemma~\ref{rank-criteria-basis}, we show that when a group $G$ acts on a free $\Z[\omega]$-module $M$ of rank $n$, the rank of $M^G$ can be determined by the linear relations between $m_i-m_i^{\sigma}$ for a basis $\{m_i\}$ of $M$ and various $\sigma \in G$. 
In Lemma~\ref{rank-criteria}, we prove that the same is true when $\{m_i\}$ is only a basis up to finite index. Note that when $\rk_{\Z[\omega]}(M)=1$, Lemma~\ref{rank-criteria} reduces to \cite{Kl} Lemma 2.8.

In Lemma~\ref{lem-geo-rk-2} and Lemma~\ref{G-acts-on-M}, we prove that $M:=E_2(\bQ(t))$ is a free $\Z[\omega]$-module of rank $2$ and $G:=\Gal(\bQ/\Q(\omega))$ acts on the $\Z[\omega]$-module $M$.

In Proposition~\ref{gen-over-C} and Theorem~\ref{result-(4,3,2)}, we construct independent elements $P,Q \in E_2(\bQ(t))$, investigate the linear relations between $P-P^{\sigma}$ and $Q-Q^{\sigma}$ for a few $\sigma \in G$, and prove Theorem~\ref{result-(6,3,0)}. 

\begin{lem} \label{lem-lin-alg}
    Let $\{m_1, \dots, m_n\}$ be a basis of a free $\Z[\omega]$-module $M$ of rank $n$ and $1 \leq k \leq n$.
    Let $c_{ij} \in \Z[\omega]$ and $n_i=\sum_{j=1}^n c_{ij} m_j$. 
    Then $\{n_1, \dots, n_k\}$ is independent over $\Z[\omega]$ if and only if the matrix $(c_{ij})_{k \times n}$ over $\Q(\omega)$ has rank $k$. 
\end{lem}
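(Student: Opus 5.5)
The plan is to reduce Lemma~\ref{lem-lin-alg} to linear algebra over the fraction field $\Q(\omega)$ of $\Z[\omega]$. Since $M$ is free with basis $\{m_1,\dots,m_n\}$, the coordinate map $M \to \Z[\omega]^n$, $\sum_j a_j m_j \mapsto (a_1,\dots,a_n)$, is an isomorphism of $\Z[\omega]$-modules. Under this map each $n_i$ corresponds to the $i$-th row $(c_{i1},\dots,c_{in})$ of the matrix $C=(c_{ij})_{k\times n}$. So $\{n_1,\dots,n_k\}$ is independent over $\Z[\omega]$ exactly when the rows of $C$ are $\Z[\omega]$-linearly independent in $\Z[\omega]^n$.

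It remains to show that rows with entries in $\Z[\omega]$ are independent over $\Z[\omega]$ if and only if they are independent over $\Q(\omega)$. One direction is immediate: a nontrivial $\Z[\omega]$-relation is also a nontrivial $\Q(\omega)$-relation. For the converse, suppose $\sum_i \lambda_i (c_{i1},\dots,c_{in})=0$ with $\lambda_i\in\Q(\omega)$ not all zero. Every element of $\Q(\omega)$ can be written as $\alpha/d$ with $\alpha\in\Z[\omega]$ and $d\in\Z_{>0}$, since $\Q(\omega)=\Q\otimes_\Z\Z[\omega]$. Multiplying by a common denominator $d$ therefore gives a nontrivial relation with coefficients $d\lambda_i\in\Z[\omega]$. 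Because $\Z[\omega]$ is an integral domain, the resulting coefficients are still not all zero.

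Finally, $k$ vectors in $\Q(\omega)^n$ are linearly independent if and only if the $k\times n$ matrix they form has rank $k$. This is standard linear algebra over a field, and combining the three steps gives the lemma.

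There is no real obstacle here. The only point needing care is clearing denominators, and that works because $\Z[\omega]$ is a domain whose fraction field is $\Q(\omega)$.
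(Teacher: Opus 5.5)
Your proposal is correct and follows essentially the same route as the paper. The paper also uses the basis $\{m_j\}$ to translate $\sum_i t_i n_i = 0$ into a linear relation among the rows of $(c_{ij})$, then clears denominators with a nonzero element of $\Z[\omega]$ to pass between independence over $\Z[\omega]$ and rank $k$ over $\Q(\omega)$.
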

\begin{proof}
    Let $t_1, \dots, t_k \in \Z[\omega]$. 
    Then $$\sum_{i=1}^k t_i n_i = \sum_{i=1}^k t_i \sum_{j=1}^n c_{ij}m_j = \sum_{j=1}^n m_j \sum_{i=1}^k t_i c_{ij}.$$
    So $$\sum_{i=1}^k t_i n_i=0 \iff \sum_{i=1}^k t_i c_{ij}=0 \ \forall 1\leq j \leq n \iff \sum_{i=1}^k t_i(c_{i1}, \dots, c_{in})=(0,\dots,0).$$
    Therefore, when $(c_{ij})_{k \times n}$ has rank $k$, i.e. has independent rows, we know that the above equivalence being true implies that all $t_i$'s are zero, which proves that $\{n_1, \dots, n_k\}$ is independent over $\Z[\omega]$. Conversely, if $\{n_1, \dots, n_k\}$ is independent over $\Z[\omega]$, then for any $t_1', \dots, t_k' \in \Q(\omega)$ such that $\sum_{i=1}^k t_i'(c_{i1}, \dots, c_{in})=(0,\dots,0)$, pick $t \in \Z[\omega]\backslash\{0\}$ such that $tt_i' \in \Z[\omega]$ for all $i$, then the above equivalence tells us that  $t_1'=\dots=t_k'=0$ and the matrix $(c_{ij})_{k \times n}$ has rank $k$.
\end{proof}

\begin{lem} \label{rank-criteria-basis}
    Let $M$ be a free $\Z[\omega]$-module of rank $n$ and $G$ be a group that acts on the $\Z[\omega]$-module $M$ (i.e. every $\sigma \in G$ acts $\Z[\omega]$-linearly).
    Let $\{m_1, \dots, m_n\}$ be a $\Z[\omega]$-basis for $M$. Then: \begin{enumerate}
        \item For $1 \leq k \leq n$, we have $\rk_{\Z[\omega]}(M^G) \geq k$ if and only if there exist $c_{ij} \in \Z[\omega]$ for $1 \leq i \leq k, 1 \leq j \leq n$, such that: \begin{enumerate}
            \item[(i)] the $k \times n$ matrix $(c_{ij})$ has rank $k$ over $\Q(\omega)$; and
            \item[(ii)] for each $1 \leq i \leq k$, we have $\sum_{j=1}^n c_{ij}(m_j-m_j^{\sigma})=0$ for all $\sigma \in G$.
        \end{enumerate}
        
        \item In particular, we have $\rk_{\Z[\omega]}(M^G)=n$ if and only if all of $m_1, \dots, m_n \in M^G$.
    \end{enumerate}
    
\end{lem}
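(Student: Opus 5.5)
The plan is to observe first that $M^G$ is a $\Z[\omega]$-submodule of the free module $M$, because each $\sigma$ acts $\Z[\omega]$-linearly. Since $\Z[\omega]$ is a PID, $M^G$ is itself free, and its rank is the maximal number of $\Z[\omega]$-independent elements it contains. So the statement $\rk_{\Z[\omega]}(M^G)\geq k$ is equivalent to the existence of $k$ independent elements $n_1,\dots,n_k\in M^G$. The whole proof then reduces to translating these two conditions, independence and $G$-invariance, into conditions (i) and (ii) on coefficients.

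For part (a), I would write any candidate elements as $n_i=\sum_{j=1}^n c_{ij}m_j$ with $c_{ij}\in\Z[\omega]$; every element of $M$ has this form because $\{m_j\}$ is a basis. By Lemma~\ref{lem-lin-alg}, the $n_i$ are independent if and only if the matrix $(c_{ij})$ has rank $k$ over $\Q(\omega)$, which is condition (i). For invariance, $\Z[\omega]$-linearity of $\sigma$ gives
$$n_i-n_i^{\sigma}=\sum_{j=1}^n c_{ij}(m_j-m_j^{\sigma}),$$
so $n_i\in M^G$ if and only if condition (ii) holds for this $i$. Both directions of (a) follow at once: given $k$ independent invariant elements, their coefficient matrix satisfies (i) and (ii); conversely, given $c_{ij}$ satisfying (i) and (ii), the corresponding $n_i$ are $k$ independent elements of $M^G$.

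For part (b), the direction where all $m_j\in M^G$ is immediate, since then $M^G=M$ and its rank is $n$. For the converse, I would apply (a) with $k=n$. This produces an $n\times n$ matrix $C=(c_{ij})$ that is invertible over $\Q(\omega)$ and satisfies $C\cdot(m_j-m_j^{\sigma})_j=0$ in $M^n$. Let $d\in\Z[\omega]\setminus\{0\}$ be such that $dC^{-1}$ has entries in $\Z[\omega]$, for instance $d=\det C$ via the adjugate. Multiplying by $dC^{-1}$ gives $d(m_j-m_j^{\sigma})=0$ for every $j$. Because $M$ is free, it is torsion-free, so $m_j=m_j^{\sigma}$ for all $\sigma\in G$.

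The only delicate point is this clearing of denominators and the appeal to torsion-freeness in (b), together with correctly citing that submodules of free modules over the PID $\Z[\omega]$ are free, so that rank equals the maximal number of independent elements. Everything else is a direct translation.
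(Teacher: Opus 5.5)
Your proposal is correct and follows the paper's proof essentially step for step: part (a) uses Lemma~\ref{lem-lin-alg} together with the $\Z[\omega]$-linearity of each $\sigma$, and the nontrivial direction of part (b) uses the adjugate identity $\mathrm{adj}(C)\,C=\det(C)I$ plus torsion-freeness of $M$. Your remark that $M^G$ is a submodule of a free module over the PID $\Z[\omega]$, so that its rank equals the maximal number of independent elements, makes explicit a point the paper uses without stating it.
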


\begin{proof} 

    (a) ($\Rightarrow$) When $\rk_{\Z[\omega]}(M^G) \geq k$, let $n_i = \sum_{j=1}^n c_{ij} m_j \in M^G$ ($1\leq i \leq k$) be $k$ independent elements in $M^G$. By Lemma~\ref{lem-lin-alg}, the matrix $(c_{ij})$ satisfies the conditions for part (a). 

    ($\Leftarrow$) When such $(c_{ij})$ exists, by Lemma~\ref{lem-lin-alg}, we know that $n_i:=\sum_{j=1}^n c_{ij} m_j$ are in $M^G$ and are independent over $\Z[\omega]$.

    (b) ($\Leftarrow$). This is obvious.

    ($\Rightarrow$). When $\rk_{\Z[\omega]}(M^G)=n$, let $C:=(c_{ij})_{n \times n}$ be the matrix of rank $n$ over $\Q(\omega)$ with $c_{ij} \in \Z[\omega]$ given in part (a). Still let $n_i=\sum_{j=1}^n c_{ij}m_j$, which lies in $M^G$. 
    Denote the column vectors $(n_1, \dots, n_n),(m_1, \dots, m_n) \in M^{\oplus n}$ by $\vec{n}, \vec{m}$ respectively. 
    Then we have $\vec{n}=C \vec{m}$. 
    Therefore 
    $$\mathrm{adj}(C) \vec{n} = \det(C) \vec{m}.$$ 
    Since $n_i \in M^G$ for all $i$, we have $\det(C)m_i \in M^G$ and therefore $\det(C)(m_i-m_i^{\sigma})=0$ for all $i$ and $\sigma \in G$.
    Since $C$ has rank $n$ and $M$ is torsion-free, we have $m_i-m_i^{\sigma}=0$ for all $\sigma \in G$. Therefore we have $m_i \in M^G$ for all $1 \leq i \leq n$.
\end{proof}

\begin{lem} \label{rank-criteria}
    Let $M$ be a free $\Z[\omega]$-module of rank $n$ and $G$ be a group that acts on the $\Z[\omega]$-module $M$. Let $\{m_1, \dots, m_n\}$ be a $\Z[\omega]$-independent subset of $M$. Then: \begin{enumerate}
        \item 
        For $1 \leq k \leq n$, we have $\rk_{\Z[\omega]}(M^G) \geq k$ if and only if there exist $c_{ij} \in \Z[\omega]$ for $1 \leq i \leq k, 1 \leq j \leq n$, such that: \begin{enumerate}
            \item[(i)] the $k \times n$ matrix $(c_{ij})$ has rank $k$ over $\Q(\omega)$; and
            \item[(ii)] for each $1 \leq i \leq k$, we have $\sum_{j=1}^n c_{ij}(m_j-m_j^{\sigma})=0$ for all $\sigma \in G$.
        \end{enumerate}
        \item 
        In particular, we have $\rk_{\Z[\omega]}(M^G)=n$ if and only if all of $m_1, \dots, m_n \in M^G$.
    \end{enumerate}
\end{lem}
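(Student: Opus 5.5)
The plan is to reduce Lemma~\ref{rank-criteria} to Lemma~\ref{rank-criteria-basis} by passing to $M\otimes_{\Z[\omega]}\Q(\omega)$, or equivalently by clearing denominators. Since $M$ is free of rank $n$, fix a $\Z[\omega]$-basis $\{e_1,\dots,e_n\}$ of $M$ and write $m_i=\sum_j a_{ij}e_j$ with $a_{ij}\in\Z[\omega]$. Because $\{m_1,\dots,m_n\}$ is independent, Lemma~\ref{lem-lin-alg} shows that the $n\times n$ matrix $A=(a_{ij})$ has rank $n$ over $\Q(\omega)$, so $d:=\det(A)\neq 0$. Multiplying $\vec e$ by $\mathrm{adj}(A)$ gives $d\,e_j=\sum_i \mathrm{adj}(A)_{ji} m_i$. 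Thus $dM\subseteq N:=\bigoplus \Z[\omega]m_i\subseteq M$, and $N$ is itself a free module with basis $\{m_i\}$.

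For part (a), direction ($\Leftarrow$): given $c_{ij}$ satisfying (i) and (ii), the elements $n_i=\sum_j c_{ij}m_j$ satisfy $n_i^\sigma=n_i$, since $\sigma$ is $\Z[\omega]$-linear and $n_i-n_i^\sigma=\sum_j c_{ij}(m_j-m_j^\sigma)=0$. By Lemma~\ref{lem-lin-alg}, applied in the free module $N$ with basis $\{m_j\}$, these $n_i$ are independent. Hence $\rk_{\Z[\omega]}(M^G)\ge k$.

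For direction ($\Rightarrow$): take $k$ independent elements $n_1',\dots,n_k'\in M^G$. Then $d n_i'\in N\cap M^G$, and these are still independent because $M$ is torsion-free. Write $d n_i'=\sum_j c_{ij}m_j$ with $c_{ij}\in\Z[\omega]$. Condition (ii) then follows from $G$-invariance together with linearity, and condition (i) follows from Lemma~\ref{lem-lin-alg} in $N$. Note that $N$ need not be $G$-stable, but this does not matter: the expressions $m_j-m_j^\sigma$ live in $M$, and we only use linearity in $M$.

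For part (b), ($\Leftarrow$) is immediate, since the $m_i$ are $n$ independent elements of $M^G$. For ($\Rightarrow$), the same adjugate trick as in Lemma~\ref{rank-criteria-basis}(b) applies. From (a) with $k=n$ we get an invertible matrix $C$ with $\vec n=C\vec m$, where all entries of $\vec n$ lie in $M^G$. Then $\det(C)m_i\in M^G$, so $\det(C)(m_i-m_i^\sigma)=0$, and torsion-freeness of $M$ gives $m_i\in M^G$.

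The only delicate point is making sure Lemma~\ref{lem-lin-alg} is applied to a genuinely free module with basis $\{m_i\}$, namely $N$, and that torsion-freeness of $M$ is what carries independence and invariance across the scaling by $d$. I expect no real obstacle.
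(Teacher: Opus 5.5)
Your proof is correct and is essentially the paper's argument. Both proofs rest on the adjugate identity $\mathrm{adj}(A)\vec m=\det(A)\vec e$ for the change-of-basis matrix, together with torsion-freeness of $M$. In particular, your coefficient matrix for $d\,n_i'$ is exactly the paper's $C\,\mathrm{adj}(A)$. The difference is only organizational: you apply Lemma~\ref{lem-lin-alg} inside the free submodule $N=\bigoplus\Z[\omega]m_i$ and so never need Lemma~\ref{rank-criteria-basis}, whereas the paper transfers the coefficient matrix between $\{m_i\}$ and a basis of $M$ and then invokes that lemma.
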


\begin{proof}

    Let $\{m_1', \dots, m_n'\}$ be a basis for the $\Z[\omega]$-module $M$, then there exist $a_{ij} \in \Z[\omega]$ for $1 \leq i,j \leq n$, such that $m_i = \sum_{j=1}^n a_{ij} m_j'$. 
    Denote the column vectors $(m_1, \dots, m_n),(m_1', \dots, m_n') \in M^{\oplus n}$ by $\vec{m},\vec{m}'$ respectively, and the matrix $(a_{ij})_{n \times n}$ over $\Q(\omega)$ by $A$. 
    Then we have $$\vec{m}=A\vec{m}', \quad \mathrm{adj}(A) \vec{m}=\det(A)\vec{m}'.$$
    By Lemma~\ref{lem-lin-alg} and independence of $\{m_i\}$, we know that $A$ has rank $n$. 

    (a) By Lemma ~\ref{rank-criteria-basis}, it suffices to prove that the existence of the matrix $(c_{ij})_{k \times n}$ satisfying the conditions in part (a) over $\{m_1, \dots, m_n\}$ is equivalent to the existence of such a matrix satisfying the same conditions over $\{m_1', \dots, m_n'\}$. 

    Suppose that $C:=(c_{ij})_{k \times n}$ satisfies the conditions for $\{m_1, \dots, m_n\}$. Let $n_i = \sum_{j=1}^n c_{ij} m_j$ which lies in $M^G$. Denote the column vector $(n_1, \dots, n_k)$ by $\vec{n}$. Then 
    $$\vec{n}= C\vec{m}=CA \vec{m}'.$$ 
    Since $A$ is invertible and $n_i \in M^G$ for each $1 \leq i \leq k$,
    we know that the matrix $CA$ satisfies the conditions over $\{m_1', \dots, m_n'\}$.

    Conversely, suppose that $C:=(c_{ij})_{k \times n}$ satisfies the conditions for $\{m_1', \dots, m_n'\}$. Let $n_i=\sum_{j=1}^n c_{ij} m_j'$ which lies in $M^G$, and denote $(n_1, \dots, n_k)$ by $\vec{n}$. 
    Then 
    $$\det(A) \vec{n}=\det(A) C\vec{m}' = C \det(A) \vec{m}'=C \cdot \mathrm{adj}(A) \vec{m}.$$ 
    Since $\mathrm{adj}(A)$ is invertible over $\Q(\omega)$ and each entry of $\det(A)\vec{n}$ lies in $M^G$, we know that the matrix $C \cdot \mathrm{adj}(A)$ satisfies all the conditions over $\{m_1, \dots, m_n\}$. 
    
    (b) By Lemma ~\ref{rank-criteria-basis}, it suffices to prove that $m_i\in M^G$ for all $i$ if and only if $m_i' \in M^G$ for all $i$. 

    Since $\vec{m}=A\vec{m}'$, we know that $m_i'\in M^G$ for all $i$ implies that $m_i\in M^G$ for all $i$.

    Conversely, since $\mathrm{adj}(A)\vec{m}= \det(A)\vec{m}'$, we know that $m_i\in M^G$ for all $i$ implies that $\det(A)m_i' \in M^G$ for all $i$,
    which further implies that $\det(A)(m_i'-m_i'^{\sigma}) =0$ for all $i$ and all $\sigma \in G$. Since $M$ is torsion-free and $A$ is invertible, we know that $m_i'\in M^G$ for all $i$.
\end{proof}

\begin{remark} \label{indep-of-sigma}
    Note that the matrix $(c_{ij})$ in part (a) does not depend on the choice of $\sigma \in G$.
\end{remark}

Now we prove in Lemma~\ref{lem-geo-rk-2} that the geometric Mordell-Weil group $E_2(\bQ(t))$ is a free $\Z[\omega]$-module of rank $2$ by using the Shioda-Tate formula as stated in \cite{SS} Section 7.2 and the method in \cite{Kl} Lemma 2.2, and check in Lemma~\ref{G-acts-on-M} that the group $G:=\Gal(\bQ/\Q(\omega))$ acts on the $\Z[\omega]$-module $M$.

\begin{lem} \label{lem-geo-rk-2}
    Recall that $E_2: y^2=x^3+At^4+Bt^3+Ct^2$ and that $A,C,B^2-4AC \neq 0$. Then the $\Z[\omega]$-module $E_2(\bar{\Q}(t))$ is free of rank $2$.
\end{lem}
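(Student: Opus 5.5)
The plan is to compute $\rk_{\Z} E_2(\bQ(t))$ with the Shioda--Tate formula (\cite{SS} Section 7.2), and then deduce freeness over $\Z[\omega]$ from torsion-freeness, following \cite{Kl} Lemma 2.2.

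First, the rank over $\Z$. Since $\deg(At^4+Bt^3+Ct^2)=4\le 6$, the surface $E_2$ is a rational elliptic surface, and the Shioda--Tate formula gives
$$\rk_{\Z}E_2(\bQ(t)) = 8-\sum_v (m_v-1),$$
where $m_v$ is the number of components of the fibre at $v$. For $y^2=x^3+f(t)$ the fibre type at a finite place depends only on $\mathrm{ord}_v f \bmod 6$: orders $1,2,3,4,5$ give types $II$, $IV$, $I_0^*$, $IV^*$, $II^*$, with $m_v=1,3,5,7,9$.

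Write $f=t^2(At^2+Bt+C)$. The assumptions $C\neq 0$ and $B^2-4AC\neq0$ ensure that $\mathrm{ord}_0 f=2$ and that the quadratic has two distinct simple roots, neither equal to $0$.

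1. At $t=0$ the fibre has type $IV$, contributing $2$.
2. At the two roots of $At^2+Bt+C$ the fibres have type $II$, contributing $0$.
3. At $\infty$ we use $t^6f(1/t)=Ct^4+Bt^3+At^2$. Since $A\neq0$, this vanishes to order $2$ at $0$, so the fibre is again of type $IV$, contributing $2$.

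Hence $\rk_{\Z} E_2(\bQ(t))=8-4=4$.

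Next, torsion. A rational elliptic surface with a fibre of additive type has no nontrivial torsion whose order is prime to the characteristic beyond what the component groups allow. More directly: a $2$-torsion point would require a root of $x^3+f$ in $\bQ(t)$, which forces $f$ to be a cube. A $3$-torsion point on $y^2=x^3+f$ would have either $x=0$ with $f$ a square, or $x^3=-4f$ with $f$ a square. Since $f=t^2(At^2+Bt+C)$ and the quadratic has distinct nonzero roots, $f$ is neither a square nor a cube in $\bQ(t)$. This is where the hypotheses $A,C,B^2-4AC\neq0$ are used again.

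So $E_2(\bQ(t))$ is torsion-free. It is also finitely generated, since the surface is nontrivial and Mordell--Weil/Lang--Néron applies. Being finitely generated, it is in particular a finitely generated $\Z[\omega]$-module via the complex multiplication $(x,y)\mapsto(\omega x,y)$, which is defined over $\bQ(t)$.

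A finitely generated torsion-free module over the PID $\Z[\omega]$ is free. Its $\Z[\omega]$-rank is half its $\Z$-rank, because $\Z[\omega]\cong\Z^2$ as abelian groups. This gives $\Z[\omega]$-rank $4/2=2$.

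The main obstacle I expect is the careful application of Shioda--Tate: checking that the surface is rational, and classifying the fibre at $\infty$ and at the roots, where the non-degeneracy hypotheses are essential. The torsion check is routine, and the final step is standard module theory.
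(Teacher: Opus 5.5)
Your Shioda--Tate computation (type $IV$ fibres at $0$ and $\infty$, type $II$ at the two roots of $At^2+Bt+C$, so $\rk_{\Z}=8-4=4$) matches the paper. So does your final step: a finitely generated torsion-free module over the PID $\Z[\omega]$ is free, of $\Z[\omega]$-rank half the $\Z$-rank.

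\textbf{The gap is in the torsion step.} Your explicit check only excludes points of order $2$ (which would force $f$ to be a cube) and of order $3$ (which would force $f$ to be a square). It says nothing about points of prime order $p\ge 5$. Since $5$-torsion does occur on rational elliptic surfaces in general, ``$f$ is neither a square nor a cube'' does not by itself give torsion-freeness. Your sentence about component groups is what would have to do this work, but as stated it is too vague to count as an argument, and you never apply it to the fibres at hand.

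Either of the following closes the gap.
\begin{enumerate}
\item In characteristic $0$, specialization is injective on the torsion subgroup into the group of smooth points of every fibre. For an additive fibre this group is an extension of the component group $\Phi_v$ by $\mathbb{G}_a$, and $\mathbb{G}_a(\bQ)$ is torsion-free. So the torsion injects into $\Phi_v$, which is trivial for your type $II$ fibres.
\item For $P\neq O$,
\[\langle P,P\rangle = 2+2(P\cdot O)-\sum_v \mathrm{contr}_v(P)\ge 2-\tfrac23-\tfrac23>0,\]
since only the two type $IV$ fibres can contribute. This uses exactly the tools of Proposition~\ref{gen-over-C}.
\end{enumerate}

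Either repair also makes your separate $2$- and $3$-torsion check unnecessary. Also state explicitly that $\Z$-torsion-freeness implies $\Z[\omega]$-torsion-freeness, via $(a+b\omega^2)(a+b\omega)=a^2-ab+b^2$; the paper does this.

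For comparison, the paper gets $\Z$-torsion-freeness differently. It embeds $E_2(\bQ(t))$ into $E_0(\bQ(t))$ via the decomposition of Section~\ref{section-decomposition}. It notes that $E_0$ has six fibres of type $II$ and hence rank $8$, and quotes \cite{SS} Theorem 8.8: the Mordell--Weil group of a rank-$8$ rational elliptic surface is free. Your repaired route stays entirely on $E_2$ and avoids both the decomposition and the rank-$8$ classification.
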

\begin{proof}
    Since $A,C, B^2-4AC \neq 0$, we have $f(t) = A t^2 (t-\alpha)(t-\beta)$ for some distinct nonzero $\alpha,\beta \in \bQ$. Therefore its singular fibres are: at $t=0$ of type IV, at $t=\alpha$ of type II, at $t=\beta$ of type II, and at $t=\infty$ of type IV. By \cite{SS} Section 7.2, we know that the $\Z$-rank of $E_2(\bar{\Q}(t))$ is $4$. Once we show that $E_2(\bQ(t))$ is a free $\Z[\omega]$-module, we know that its rank as a $\Z[\omega]$-module is $2$.

    First we show that $E_2(\bar{\Q}(t))$ is a free $\Z$-module. We embed it into $E_0(\bQ(t))$ where $E_0:y^2=x^3+At^6+Bt^3+C$ by the decomposition of $E_0(\bQ(t))$ in Section ~\ref{section-decomposition}.
    Since $A,C,B^2-4AC \neq 0$, we know that the singular fibres of $E_0$ are at 
    $t=\sqrt[3]{\alpha}, \sqrt[3]{\alpha} \omega, \sqrt[3]{\alpha} \omega^2, \sqrt[3]{\beta}, \sqrt[3]{\beta} \omega, \sqrt[3]{\beta} \omega^2$, and all these (distinct) singular fibres are of type II. 
    Hence $E_0(\bQ(t))$ has rank $8$ by \cite{SS} Section 7.2. By Theorem 8.8 in \cite{SS}, the geometric Mordell-Weil group (i.e. $E_0(\bQ(t))$) of a rational elliptic surface of rank $8$ is free abelian. Hence its submodule $E_2(\bQ(t))$ is a free $\Z$-module.

    Now we show that $E_2(\bQ(t))$ is a free $\Z[\omega]$-module. Since $\Z[\omega]$ is a PID, it suffices to show that the module is torsion-free. We use the same proof as in Lemma 2.2 in \cite{Kl}. Let $P \in E_2(\bQ(t)) \backslash\{O\}$, $a,b \in \Z$, and suppose that $(a+b\omega)P=O$. Then $(a^2+b^2-ab)P=(a+b\omega^2)(a+b\omega)P=O$. Since $E_2(\bQ(t))$ is torsion-free as $\Z$-module, we have $(a-\frac{1}{2}b)^2+\frac{3}{4}b^2 = a^2+b^2-ab=0$. So $a=b=0$.
\end{proof}

\begin{lem} \label{G-acts-on-M}
    Let $M=E_2(\bQ(t))$ which is a free $\Z[\omega]$-module of rank $2$ by Lemma~\ref{lem-geo-rk-2}. 
    Let $G=\Gal(\bQ/\Q(\omega))$ and act on $E_2$ by the usual Galois action on elliptic curves. Then $G$ acts on the $\Z[\omega]$-module $M$, i.e., every $\sigma \in G$ acts $\Z[\omega]$-linearly on $M$. 
\end{lem}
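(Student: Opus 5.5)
We need to show that each $\sigma \in G=\Gal(\bQ/\Q(\omega))$ gives a well-defined map $M\to M$ that is additive and commutes with multiplication by $\omega$; $\Z[\omega]$-linearity then follows. The plan is to check three things in turn: that $\sigma$ preserves $M$, that it respects the group law, and that it commutes with the complex multiplication.

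First we make the action explicit. Extend $\sigma$ to $\bQ(t)$ by acting on coefficients and fixing $t$. For $P=(x(t),y(t))\in M$ set $P^{\sigma}=(x^{\sigma}(t),y^{\sigma}(t))$, and fix $O^{\sigma}=O$. The coefficients $A,B,C$ of $E_2$ lie in $\Q$. Applying $\sigma$ to the identity $y^2=x^3+At^4+Bt^3+Ct^2$ therefore gives the same identity for $(x^{\sigma},y^{\sigma})$. Hence $P^{\sigma}\in M$, and $\sigma$ is a bijection of $M$ with inverse $\sigma^{-1}$. The rule also composes correctly, $(P^{\sigma})^{\tau}=P^{\sigma\tau}$ (or the reverse, depending on the convention), so it is a group action.

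Second, additivity. The chord-tangent addition formulas on $y^2=x^3+f(t)$ express the coordinates of $P_1+P_2$ as rational functions of the coordinates of $P_1,P_2$. These rational functions have coefficients in $\Q[t]$, since $f\in\Q[t]$. The case distinctions ($P_1=-P_2$, $P_1=P_2$, and so on) are defined by polynomial equalities over $\Q(t)$, so $\sigma$ preserves them. Consequently $(P_1+P_2)^{\sigma}=P_1^{\sigma}+P_2^{\sigma}$, and also $(-P)^{\sigma}=-P^{\sigma}$ since $-(x,y)=(x,-y)$.

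Third, compatibility with $\omega$. Here we use that $\sigma$ fixes $\Q(\omega)$, so $\sigma(\omega)=\omega$. Then
\[
(\omega P)^{\sigma}=(\sigma(\omega)x^{\sigma},y^{\sigma})=(\omega x^{\sigma},y^{\sigma})=\omega P^{\sigma}.
\]
Combining this with additivity, for $a+b\omega\in\Z[\omega]$ we get
\[
((a+b\omega)P)^{\sigma}=aP^{\sigma}+b\,\omega P^{\sigma}=(a+b\omega)P^{\sigma}.
\]
This is exactly the required $\Z[\omega]$-linearity.

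None of these steps is a real obstacle. The only point needing care is the restriction to $G$: an element of $\Gal(\bQ/\Q)$ that moves $\omega$ would send $\omega P$ to $\omega^2P^{\sigma}$, so it would act only semilinearly. That is precisely why the lemma takes $G=\Gal(\bQ/\Q(\omega))$.
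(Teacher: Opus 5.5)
Your proof is correct and follows the same route as the paper: you use additivity of the Galois action together with $\sigma(\omega)=\omega$, which gives $(\omega P)^{\sigma}=\omega P^{\sigma}$ and hence $\Z[\omega]$-linearity. The only difference is that you also verify two routine facts the paper takes for granted: that $P^{\sigma}\in M$ because $A,B,C\in\Q$, and that $\sigma$ is additive, via the addition formulas.
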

\begin{proof}
    Let $\sigma \in G$. Then $\sigma(\omega)=\omega$.
    
    Since $G$ is a Galois action on elliptic curves, we have $\sigma(m+n)=\sigma(m)+\sigma(n)$ for all $m,n \in M$ and $\sigma(cm)=c\sigma(m)$ for all $m \in M, c \in \Z$.

    For $m=(x,y)$, we have $\sigma(\omega m)=\sigma(\omega(x,y))=\sigma((\omega x,y)) = (\sigma(\omega x),\sigma(y))=(\sigma(\omega)\sigma(x),\sigma(y))\allowbreak
    =(\omega \sigma(x),\sigma(y))=\omega(\sigma(x),\sigma(y))=\omega \sigma(m)$.
    
    Therefore, we have $\sigma((a+b\omega)m) = \sigma(am + b\omega m) = \sigma(am)+\sigma(b\omega m) = a\sigma(m) + b \sigma(\omega m) = a\sigma(m) + b \omega \sigma(m) = (a+b\omega)\sigma(m)$ for all $a,b \in \Z$. This completes the proof of Lemma ~\ref{G-acts-on-M}.
\end{proof}

Now we construct two elements in the rank-two $\Z[\omega]$-module $E_2(\bQ(t))$ and prove that they are $\Z[\omega]$-independent.

\begin{prop} \label{gen-over-C}
    Let $A,B,C \in \Q$,  $A,C,B^2-4AC \neq 0$, and recall that $E_2: y^2 = x^3 + At^4 + Bt^3 + Ct^2$. Fix any square root of $A$ and any square root of $C$ in $\bQ$ and denote $\sqrt{AC}=\sqrt{A}\sqrt{C}$. Then 
    the points 
    $$P=\left(\sqrt[3]{2 \sqrt{A}\sqrt{C}-B} \cdot t, \quad   \sqrt{A} t^2 + \sqrt{C} t\right)$$
    and 
    $$Q=\left(\sqrt[3]{-2 \sqrt{A}\sqrt{C}-B} \cdot t, \quad   \sqrt{A} t^2 - \sqrt{C} t\right)$$
    are on $E_2(\bQ(t))$ and are independent over $\Z[\omega]$, where the two choices of cube roots are two arbitrarily fixed choices.
\end{prop}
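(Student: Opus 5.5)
First I will verify that $P$ and $Q$ lie on $E_2$ by direct substitution. For $P$, we have $y^2=(\sqrt{A}t^2+\sqrt{C}t)^2=At^4+2\sqrt{A}\sqrt{C}\,t^3+Ct^2$. We also have $x^3=(2\sqrt{A}\sqrt{C}-B)t^3$. Hence
$$x^3+At^4+Bt^3+Ct^2=At^4+2\sqrt{A}\sqrt{C}\,t^3+Ct^2=y^2.$$
The computation for $Q$ is identical with $\sqrt{C}$ replaced by $-\sqrt{C}$. Since $B^2-4AC\neq 0$, both $\pm 2\sqrt{A}\sqrt{C}-B$ are nonzero, so neither point is trivial. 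The choice of cube root changes the point only by a power of $\omega$, which does not affect independence.

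For independence, Lemma~\ref{lem-geo-rk-2} says $M=E_2(\bQ(t))$ is free of $\Z[\omega]$-rank $2$. So it suffices to show that the $\Z$-span of $P,\omega P,Q,\omega Q$ has $\Z$-rank $4$. Equivalently, the $4\times 4$ Gram matrix of the Néron–Tate (Shioda) height pairing on these points should be nonsingular. I will compute this with Shioda's formula $\langle R,S\rangle=\chi+(R.O)+(S.O)-(R.S)-\sum_v \mathrm{contr}_v(R,S)$, where $\chi=1$ because $E_2$ is rational (degree $4\le 6$).

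All four points are polynomial with $\deg x\le 2$ and $\deg y\le 3$, so they are integral and $(R.O)=0$. The intersection number $(R.S)$ counts the values $t$ at which $R$ and $S$ agree, including $t=\infty$ after the change of variables $t\mapsto 1/t$ with weights $t^2,t^3$. The local contributions come only from the two type IV fibres at $t=0$ and $t=\infty$, where $\mathrm{contr}$ takes the values $2/3$ or $1/3$ depending on the components met. All our points pass through the singular point $(0,0)$ at $t=0$, so they meet non-identity components there; the same holds at $\infty$, where $x\sim t$ and $y\sim\sqrt{A}t^2$ vanish to the right orders.

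Because $\omega$ is an isometry and the pairing is Hermitian-compatible, I expect $\langle P,P\rangle=\langle Q,Q\rangle=2\cdot(1-\tfrac{2}{3})\cdot\ldots$; concretely something like $2/3$. I also expect $\langle P,\omega P\rangle=-\tfrac12\langle P,P\rangle$.

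The cross terms $\langle P,\omega^k Q\rangle$ are the main obstacle. Comparing $y$-coordinates shows that $P$ and $\omega^kQ$ agree only where $2\sqrt{C}t=0$, that is, at $t=0$ and possibly at $\infty$. So $(P.\omega^kQ)$ away from the reducible fibres is $0$, and everything reduces to determining which components of the two IV fibres the points hit. I will determine this by blowing up at $t=0$ and $t=\infty$. The component hit at $t=0$ is governed by the leading coefficient ratio $y/t$ versus $x/t$, namely $\pm\sqrt{C}$. Since this ratio has opposite signs for $P$ and $Q$, I expect the two points to land on different non-identity components at $t=0$ but on the same component at $\infty$, where the leading term $\sqrt{A}$ is shared. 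That should give a cross pairing such as $\langle P,Q\rangle=1-\tfrac13-\tfrac23=0$, with the $\omega$-twists permuting the components compatibly.

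If this component bookkeeping is too delicate, I will use a fallback. It suffices to show that $Q\notin\Q(\omega)P$, and I will check this via the Galois symmetry $\sqrt{C}\mapsto-\sqrt{C}$, which swaps $P$ and $Q$ up to $\omega$. Then I will verify nondegeneracy of the resulting $2\times2$ Hermitian Gram matrix over $\Z[\omega]$, i.e.\ check that $|\langle P,Q\rangle_{\mathrm{herm}}|^2<\langle P,P\rangle\langle Q,Q\rangle$.
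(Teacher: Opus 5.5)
Your main line is the paper's proof. You check the points by substitution, then show the Gram matrix of $P,\omega P,Q,\omega Q$ is nonsingular via Shioda's formula. This uses that the only reducible fibres are the type IV fibres at $t=0$ and $t=\infty$. It also uses that $P,Q$ hit different non-identity components at $t=0$ (the lines $y_1=\pm\sqrt{C}\,t_1$ after blowing up) and the same one at $t=\infty$ (the line $y_1=\sqrt{A}\,s_1$). Deriving $\langle P,\omega P\rangle=-\tfrac12\langle P,P\rangle=-\tfrac13$ from $1+\omega+\omega^2=0$ and the fact that $\omega$ is an isometry is a valid shortcut for the paper's direct computation. Note that $\omega$ does not permute the components at all: it only rescales $x_1$, so $\omega^kQ$ meets the same components as $Q$. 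That is why the cross-term computation is the same for every $k$.

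The step you have not justified is $(P\cdot\omega^kQ)=0$. Once intersections away from the reducible fibres are excluded, it is not true that ``everything reduces to determining which components'' the sections hit. At $t=\infty$ both sections lie on the same component, and two sections on a common component can still meet there; in the Weierstrass model both pass through $(0,0)$ at $s=0$. If they did meet, $(P\cdot Q)\geq 1$ and your value $\langle P,Q\rangle=1-\tfrac13-\tfrac23=0$ would be wrong.

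You have to carry out the blow-up. In the $s$-chart, $P=(\alpha s,\sqrt{C}s^2+\sqrt{A}s)$ and $\omega^kQ=(\omega^k\beta s,-\sqrt{C}s^2+\sqrt{A}s)$, with $\alpha^3=2\sqrt{AC}-B$ and $\beta^3=-2\sqrt{AC}-B$. Their strict transforms meet the exceptional line at $(\alpha:\sqrt{A}:1)$ and $(\omega^k\beta:\sqrt{A}:1)$. These are distinct because $\alpha^3-\beta^3=4\sqrt{AC}\neq 0$. At $t=0$ nothing is needed, since distinct components of a type IV fibre meet only at the triple point, and no section passes through it. This is precisely the check the paper makes, and it gives the block-diagonal Gram matrix with blocks $\begin{pmatrix}2/3&-1/3\\-1/3&2/3\end{pmatrix}$.

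Drop the Galois fallback. An automorphism fixing $\omega$ and $\sqrt{A}$ while negating $\sqrt{C}$ need not exist (e.g.\ $A=C=1$). Even when it does exist, your final Hermitian nondegeneracy check still requires the same pairing computation.
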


\begin{proof}
    It is easy to check that both $P$ and $Q$ do lie on $E_2$. Now we prove the $\Z[\omega]$-independence.

    Since $At^4+Bt^3+Ct^2=t^2(At^2+Bt+C)$, $B^2-4AC \neq 0$, $A \neq 0$, $C \neq 0$, and $6-\deg(At^4+Bt^3+Ct^2)=2$, we know that the singular fibres are at $t=0$, $t = \infty$, and $t$ being the two distinct roots of $At^2+Bt+C=0$. At the two simple roots of $At^2+Bt+C=0$, the fibres are of type II. At $t=0$ and $t=\infty$, the fibres are of type IV.

    Now we follow Tate's algorithm (Section 5.8.2 in \cite{SS}) to find the fibre at $t=0$ after the resolution of singularities. The surface $y^2=x^3+At^4+Bt^3+Ct^2$ is singular at $(x,y,t)=(0,0,0)$, and is smooth at $(x,y,0)$ when $(x,y) \neq (0,0)$. To find the $t$-chart of the first blow-up, we let $x=x_1t$ and $y=y_1t$, then the surface after the blow-up is $\tilde{E}: y_1^2=tx_1^3+At^2+Bt+C$. Let $F(x_1,y_1,t)=tx_1^3+At^2+Bt+C-y_1^2$, then $\frac{\partial F}{\partial x_1}=\frac{\partial F}{\partial y_1}=\frac{\partial F}{\partial t}=0$ implies that $y_1=0$, which is impossible when $t=0$ because $C \neq 0$. Therefore the surface $\tilde{E}$ is smooth in the $t$-chart after the blow-up. Letting $t=0$ in the equation defining $\tilde{E}$, we know that the fibre at $t=0$ is $y_1^2=C$. Therefore, we get two components at the fibre $t=0$ as  
    $$\{((0,0,0), (x_1:y_1:t_1)) \ \big| \ y_1=\sqrt{C} t_1 \} \quad \text{ and } \quad \{((0,0,0), (x_1:y_1:t_1))\ \big| \ y_1=-\sqrt{C} t_1 \}$$ 
    in the $((x,y,t),(x_1:y_1:t_1))$-coordinates. These two components do not intersect $(O)$, and a type IV singularity has only two non-identity components, so these are the two non-identity components and we do not have to look at other charts after the blow-up.

    Now we blow-up the sections $P$ and $Q$. In the $t$-chart of the blow-up, we have $x_1=\frac{x}{t}$ and $y_1=\frac{y}{t}$. Plugging in the $x$-coordinate and $y$-coordinate of $P$, and $t=0$, we know that the blow-up of the section $P$, denoted by $\tilde{P}$, intersects the fibre $t=0$ at  

    $$\tilde{P}\big|_{t=0}=\left( (0,0,0), \left( \sqrt[3]{2 \sqrt{A}\sqrt{C}-B} : \sqrt{C}  : 1  \right) \right).$$
    Similarly, 
    $$\tilde{Q}\big|_{t=0}=\left( (0,0,0), \left(\sqrt[3]{-2 \sqrt{A}\sqrt{C}-B} : -\sqrt{C}  : 1  \right) \right).$$
    Since $C \neq 0$, we know that $\tilde{P}$ and $\tilde{Q}$ intersect at different non-identity components at the fibre of $t=0$.

    To find the components at $t=\infty$, we need the admissible transformation $(x(t),y(t)) \mapsto (s^2 x(\frac{1}{s}), s^3 y(\frac{1}{s}))$ to switch to the part of the surface around $t=\infty$, in other words $s=0$ (see \cite{SS} Section 3.4.5). 
    Then the surface becomes 
    $$E_2:y^2=x^3+Cs^4+Bs^3+As^2,$$ and the sections $P,Q$ become 
    $$P=\left(\sqrt[3]{2 \sqrt{A}\sqrt{C}-B} \cdot s, \quad   \sqrt{C} s^2 + \sqrt{A} s\right)$$
    and 
    $$Q=\left(\sqrt[3]{-2 \sqrt{A}\sqrt{C}-B} \cdot s, \quad   -\sqrt{C} s^2 + \sqrt{A} s\right).$$

    With the same calculation as we did for $t=0$, we know that the two non-identity components at the fibre of $s=0$, in the $((x,y,s),(x_1:y_1:s_1))$-coordinates, are 
    $$\{((0,0,0), (x_1:y_1:s_1)) \ \big| \ y_1=\sqrt{A} s_1 \} \quad \text{ and } \quad \{((0,0,0), (x_1:y_1:s_1))\ \big| \ y_1=-\sqrt{A} s_1 \}.$$
    The blow-ups of the sections $P,Q$ at $s=0$ in the $s$-chart are  
    $$\tilde{P}\big|_{s=0}=\left( (0,0,0), \left( \sqrt[3]{2 \sqrt{A}\sqrt{C}-B} : \sqrt{A}  : 1  \right) \right)$$
    and 
    $$\tilde{Q}\big|_{s=0}=\left( (0,0,0), \left(\sqrt[3]{-2 \sqrt{A}\sqrt{C}-B} : \sqrt{A}  : 1  \right) \right).$$
    Since $A,C \neq 0$, we know that $\tilde{P}, \tilde{Q}$ intersect at different points with the fibre at $s=0$. 
    By our calculation of the non-identity components, $\tilde{P}$ and $\tilde{Q}$ intersect at the same non-identity component of the fibre at $s=0$.

    Therefore, by Table 6.1 in \cite{SS}, we have \begin{align*}
        \sum_v contr_v(P,Q)& =\frac{1}{3}+\frac{2}{3} = 1;\\
        \sum_v contr_v(P)& =\frac{2}{3}+\frac{2}{3} = \frac{4}{3};\\
        \sum_v contr_v(Q)& =\frac{2}{3}+\frac{2}{3} = \frac{4}{3}.
    \end{align*}

    Since the coordinates of $P$ and $Q$ in both $\{t \neq \infty\}$ and $\{s \neq \infty\}$ are polynomials in $t$ and $s$ respectively, we have $(P \cdot O)=(Q \cdot O)=0$.
    Since $A,C \neq 0$, we have $\sqrt[3]{2 \sqrt{A}\sqrt{C}-B} \cdot t \neq \sqrt[3]{-2 \sqrt{A}\sqrt{C}-B} \cdot t$ for $t \neq 0,\infty$. Therefore $P,Q$ do not intersect at $t \neq 0,\infty$. We have seen in our calculation of $\tilde{P}$ and $\tilde{Q}$ that they intersect at different points on both the fibre at $t=0$ and the fibre at $s=0$. Therefore $(P \cdot Q)=0$.

    Recall Shioda's height-pairing formulas (Theorem 6.24 in \cite{SS}) that 
    $$\langle P,Q\rangle = \chi + (P \cdot O) + (Q \cdot O) - (P \cdot Q) - \sum_v contr_v(P,Q)$$
    and that
    $$\langle P,P \rangle = 2 \chi + 2 (P \cdot O) - \sum_v contr_v(P).$$
    Note also that $\chi=1$ here because $E_2$ is a rational elliptic surface. Therefore 
    \begin{align*}
        \langle P,P \rangle & =2+0-\frac{4}{3}=\frac{2}{3},\\
        \langle P,Q \rangle & = 1+0+0-0-1=0,\\
        \langle Q,Q \rangle & =2+0-\frac{4}{3}=\frac{2}{3}.
    \end{align*}

    Now we find $\langle P, \omega P\rangle$. 
    In $\{t \neq \infty\}$, we have 
    $\omega P  = \left(\omega\sqrt[3]{2 \sqrt{A}\sqrt{C}-B} \cdot t, \   \sqrt{A} t^2 + \sqrt{C} t\right).$
    In $\{s \neq \infty\}$, we have 
    $\omega P  = \left(\omega\sqrt[3]{2 \sqrt{A}\sqrt{C}-B} \cdot s, \ \sqrt{C} s^2 + \sqrt{A} s\right)$. 
    Therefore, the blow-up of $\omega P$ at $t=0$ in the $((x,y,t),(x_1:y_1:t_1))$-coordinates is
    $$\tilde{\omega P}\big|_{t=0} = \left( (0,0,0), \left( \omega\sqrt[3]{2 \sqrt{A}\sqrt{C}-B} : \sqrt{C}  : 1  \right) \right),$$
    and at $s=0$ in the $((x,y,s),(x_1:y_1:s_1))$-coordinates is 
    $$\tilde{\omega P}\big|_{s=0} = \left( (0,0,0), \left( \omega\sqrt[3]{2 \sqrt{A}\sqrt{C}-B} : \sqrt{A}  : 1  \right) \right).$$

    Since $B^2 -4AC \neq 0$, we have $\sqrt[3]{2\sqrt{A}\sqrt{C}-B} \neq 0$. Therefore, $\tilde{\omega P}$ and $\tilde{P}$ have different $x_1$-coordinates at $t=0$ and $s=0$, and different $x$-coordinates at $t \neq 0,\infty$. So $(P \cdot \omega P) = 0$. We have calculated that $(P \cdot O)=0$, and the same calculation gives $(\omega P \cdot O)=0$. 
    From our calculation of $\tilde{P}$ and $\tilde{\omega P}$, we can see that they intersect at the same non-identity component at both $t=0$ and $s=0$. So $\sum_v contr_v(P,\omega P) = \frac{2}{3}+\frac{2}{3} = \frac{4}{3}$. 
    Therefore, 
    $$\langle P, \omega P \rangle = 1 + 0 + 0 - 0 - \frac{4}{3}=-\frac{1}{3}.$$

    The same computation gives $\langle Q, \omega Q\rangle = - \frac{1}{3}$, as taking the other square root of $C$ does not change the computation. 
    We also notice that 
    $\omega Q=\left(\omega\sqrt[3]{-2 \sqrt{A}\sqrt{C}-B} \cdot t, \   \sqrt{A} t^2 - \sqrt{C} t\right),$
    so the only difference between $Q$ and $\omega Q$ is that $x_Q$ and $x_{\omega Q}$ take different cube roots of $-2 \sqrt{A}\sqrt{C}-B$. The same thing happens to $P$. Note that 
    in the computation of $\langle P, Q\rangle$, whichever cube root we take to get $x_Q$ and $x_P$ gives the same result for $\langle P,Q \rangle$, because the expression inside the cube root for $x_P$ and for $x_Q$ are not equal. This gives $\langle P, \omega Q \rangle = 0$, $\langle \omega P, Q \rangle = 0$, and $\langle \omega P, \omega Q \rangle = 0$. 

    Therefore the height pairing matrix with respect to $P, \omega P, Q, \omega Q$ is 
    $$\begin{pmatrix}
        \frac{2}{3} & - \frac{1}{3} & 0 & 0\\
        - \frac{1}{3} & \frac{2}{3} & 0 & 0\\
        0 & 0 & \frac{2}{3} & - \frac{1}{3}\\
        0 & 0 & - \frac{1}{3} & \frac{2}{3}\\
    \end{pmatrix},$$
    which has full rank. 
    Hence $P,Q$ are independent over $\Z[\omega]$.
\end{proof}

Now we are ready to find the rank of $E_2(\Q(t))$.

\begin{thm} \label{result-(4,3,2)}
    Let $A,B,C \in \Q$,  $A,C,B^2-4AC \neq 0$, and recall that $E_2: y^2 = x^3 + At^4 + Bt^3 + Ct^2$. Then: \begin{enumerate}
    \item[(a)] $\rk_{\Z} E_2(\Q(t))=2$ if and only if both of the following hold: \begin{enumerate}
        \item[(i)] both of $\sqrt{A}, \sqrt{C}$ lie in $\Q(\omega)$;
        \item[(ii)] both of $\sqrt[3]{2\sqrt{A}\sqrt{C}-B}$ and $\sqrt[3]{-2\sqrt{A}\sqrt{C}-B}$ lie in $\Q(\omega)$.
    \end{enumerate} 
    \item[(b)] $\rk_{\Z} E_2(\Q(t))=1$ if and only if (at least) one of the following holds: \begin{enumerate}
        \item[(i)] Exactly one of $\sqrt{A},\sqrt{C}$ lies in $\Q(\omega)$, and both of $\sqrt[3]{2\sqrt{A}\sqrt{C}-B},\sqrt[3]{-2\sqrt{A}\sqrt{C}-B}$ lie in $\Q(\omega,\sqrt{AC})$;
        \item[(ii)] Both of $\sqrt{A},\sqrt{C}$ lie in $\Q(\omega)$, and exactly one of $\sqrt[3]{2\sqrt{A}\sqrt{C}-B},\sqrt[3]{-2\sqrt{A}\sqrt{C}-B}$ lies in $\Q(\omega)$.
    \end{enumerate}
    \item[(c)] $\rk_{\Z} E_2(\Q(t))=0$ otherwise.
    \end{enumerate}
\end{thm}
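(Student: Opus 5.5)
We reduce to computing $\rk_{\Z[\omega]} M^G$, where $M=E_2(\bQ(t))$ and $G=\Gal(\bQ/\Q(\omega))$. By Lemma~\ref{omega}, $\rk_{\Z}E_2(\Q(t))=\frac12 R(f)=\frac12\rk_{\Z}E_2(\Q(\omega)(t))=\rk_{\Z[\omega]}M^G$. By Lemma~\ref{lem-geo-rk-2}, Lemma~\ref{G-acts-on-M} and Proposition~\ref{gen-over-C}, $\{P,Q\}$ is a $\Z[\omega]$-independent subset of the free rank-$2$ module $M$. So Lemma~\ref{rank-criteria} applies: the rank is $2$ iff $P,Q\in M^G$, and it is $\geq 1$ iff some nonzero $(c_1,c_2)\in\Z[\omega]^2$ satisfies $c_1(P-P^\sigma)+c_2(Q-Q^\sigma)=0$ for all $\sigma\in G$. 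The rest of the proof computes $P^\sigma$ and $Q^\sigma$ for suitable $\sigma$ and decides when such relations exist.

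\textbf{Computing the conjugates.} Write $a=\sqrt{A}$, $c=\sqrt{C}$, $u=\sqrt[3]{2ac-B}$, $v=\sqrt[3]{-2ac-B}$. Each $\sigma\in G$ sends $a\mapsto\pm a$, $c\mapsto\pm c$, $u\mapsto\omega^i u'$ and $v\mapsto \omega^j v'$, where $u',v'$ are the chosen cube roots of the conjugated radicands. Four sign cases arise.
\begin{itemize}
\item If $a\mapsto a$ and $c\mapsto c$, then $P^\sigma=\omega^iP$ and $Q^\sigma=\omega^jQ$.
\item If $a\mapsto -a$ and $c\mapsto -c$, then $ac$ is fixed, $P^\sigma=-\omega^iP$ and $Q^\sigma=-\omega^jQ$, since $-(x,y)=(x,-y)$.
\item If $a\mapsto a$ and $c\mapsto -c$, the radicands swap, so $P^\sigma=\omega^iQ$ and $Q^\sigma=\omega^jP$.
\item If $a\mapsto -a$ and $c\mapsto c$, then $P^\sigma=-\omega^iQ$ and $Q^\sigma=-\omega^jP$.
\end{itemize}
Thus $G$ acts on $\{P,Q\}$ through a finite group of monomial matrices with entries in $\pm\omega^k$, and every relation in Lemma~\ref{rank-criteria} is a linear condition on $(c_1,c_2)$ with coefficients $1-(\text{root of unity})$.

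\textbf{Case analysis.} For the rank-$2$ statement, $P$ and $Q$ are both fixed by $G$ iff all of $a,c,u,v$ lie in $\Q(\omega)$; the sign-changing $\sigma$ would give $P^\sigma=\pm\omega^iP$ or send $P$ to a multiple of $Q$, and neither equals $P$. This proves (a). For rank $\geq1$, suppose first that $a,c\in\Q(\omega)$. Then $G$ acts diagonally, $M^G$ is spanned up to finite index by whichever of $P,Q$ is fixed, and the rank is the number of $u,v$ lying in $\Q(\omega)$. This gives (b)(ii).

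Next suppose exactly one of $a,c$ lies in $\Q(\omega)$, so that $\sqrt{AC}\notin\Q(\omega)$. Take $H=\Gal(\bQ/\Q(\omega,\sqrt{AC}))$. On $H$ the action is diagonal, so any invariant element $c_1P+c_2Q$ with $c_1\neq0$ forces $u$ to be $H$-fixed; $v$ is handled symmetrically. Hence $u,v\in\Q(\omega,\sqrt{AC})$. Conversely, if $u,v\in\Q(\omega,\sqrt{AC})$, we choose $v=\rho(u)$ for an element $\rho$ of $G$ swapping $\pm\sqrt{AC}$. Then $R=P\mp Q$ (with the sign given by which of $a,c$ moves) is $G$-invariant, which gives rank $1$, and rank $2$ is excluded by (a). This proves (b)(i).

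Finally, if neither $a$ nor $c$ lies in $\Q(\omega)$, we split on whether $\sqrt{AC}\in\Q(\omega)$. If it is, the radicands of $u$ and $v$ are in $\Q(\omega)$, and some $\sigma$ negates both $a$ and $c$, giving $P^\sigma=-\omega^iP$. Combined with elements acting by $\omega$-powers, the relations force $c_1(1+\omega^i)=0$ and $c_2(1+\omega^j)=0$. Since $-\omega^i\neq1$, this forces $c_1=c_2=0$. If $\sqrt{AC}\notin\Q(\omega)$, we intersect the conditions from a $\sigma$ negating only $a$ with those from a $\tau$ negating only $c$, and the resulting system again has only the trivial solution. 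This gives (c).

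\textbf{Main obstacle.} I expect the hardest step to be (b)(i) together with the mixed-sign exclusion cases. There the relations coming from different $\sigma$ must be combined, and the cube-root choices $i,j$ have to be tracked carefully so that the $2\times2$ systems are shown to have only the trivial solution.
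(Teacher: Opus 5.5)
Your proposal is correct and is essentially the paper's proof. It uses the same ingredients: Lemma~\ref{rank-criteria} applied to the explicit conjugates of $P,Q$; a case split on which of $\sqrt{A},\sqrt{C},\sqrt{AC}$ lie in $\Q(\omega)$; the subgroup fixing $\Q(\omega,\sqrt{AC})$ (the paper's Claim) to kill a coefficient whenever a cube root is not in $\Q(\omega,\sqrt{AC})$; and the invariant element $P\mp\omega^{-i}Q$ in the mixed case. The only variation is when none of $\sqrt{A},\sqrt{C},\sqrt{AC}$ lies in $\Q(\omega)$: you combine two elements, each negating one square root, where the paper uses a single $\sigma$ negating both. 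Your version works, since the two elements force $c_2=-\omega^{i}c_1$ and $c_2=\omega^{k}c_1$, and $\omega^i+\omega^k\neq 0$.

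One step needs an extra line. From $c_1\neq 0$ you only get $u\in\Q(\omega,\sqrt{AC})$, so you still need $v\in\Q(\omega,\sqrt{AC})$. You can either note that $c_2\neq 0$ too (if $c_2=0$, torsion-freeness makes $P$ itself $G$-fixed, which is impossible when $\sqrt{A}$ or $\sqrt{C}\notin\Q(\omega)$). Or, as the paper does, use $\rho(u)^3=v^3$ for $\rho$ negating $\sqrt{AC}$, which gives $u\in\Q(\omega,\sqrt{AC})$ if and only if $v\in\Q(\omega,\sqrt{AC})$.
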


\begin{proof}
    Let $G=\Gal(\bQ/\Q(\omega))$, and $M=E_2(\bQ(t))$, and $P,Q \in E_2(\bQ(t))$ as constructed in Proposition ~\ref{gen-over-C}, where we fix an arbitrary choice of the two square roots and the two cube roots. Then by Lemma ~\ref{G-acts-on-M}, we know that these $M$ and $G$ satisfy the conditions in Lemma ~\ref{rank-criteria}. 
    We also know by Lemma ~\ref{omega} that $\rk_{\Z}(E_2(\Q(t)))
    =\frac{1}{2} \rk_{\Z}(E_2(\Q(\omega)(t)))
    =\frac{1}{2} \rk_{\Z} (M^G)
    =\rk_{\Z[\omega]}(M^G)$. 

    (a) This is precisely the equivalent conditions for $P,Q \in E_2(\Q(\omega)(t))$. By Lemma ~\ref{rank-criteria}, we get part (a).

    (b) Now we find the equivalent conditions in terms of $A,B,C$ for the following statement: \begin{equation} \label{rk-1-equiv}
        \exists c_1, c_2 \in \Z[\omega], (c_1, c_2) \neq (0,0) \ s.t.\ \forall \sigma \in G, \ \text{we have} \ c_1(P-P^{\sigma})+c_2(Q-Q^{\sigma})=0.
    \end{equation}
    This statement is equivalent to $\rk_{\Z[\omega]}(M^G) \geq 1$ by Lemma~\ref{rank-criteria}.

    Denote $\alpha=\sqrt[3]{2 \sqrt{A}\sqrt{C}-B}$ and $\beta=\sqrt[3]{-2 \sqrt{A}\sqrt{C}-B}$. 
    We first find all the possibilities for the extension degrees between $\Q(\omega,\alpha), \Q(\omega,\beta), \Q(\omega,\sqrt{AC}),\Q(\omega)$.

    First, we discuss the degree of $\alpha$ over $\Q(\omega,\sqrt{AC})$. If the polynomial $p_1(x)=x^3-(2\sqrt{AC}-B)$ is irreducible over $\Q(\omega,\sqrt{AC})$, then $\alpha$ has degree $3$ over $\Q(\omega,\sqrt{AC})$. 
    If the degree-$3$ polynomial $p_1$ is reducible over $\Q(\omega,\sqrt{AC})$, then it contains a root in $\Q(\omega,\sqrt{AC})$ and therefore contains all three roots by multiplying $\omega$ and $\omega^2$ to the known root. In this case, $\alpha$ has degree $1$ over $\Q(\omega,\sqrt{AC})$. Same thing happens for $\beta$. 
    Therefore in any case, we have 
    $$[\Q(\omega,\alpha):\Q(\omega,\sqrt{AC})] \in \{1,3\} \quad \text{and} \quad [\Q(\omega,\beta):\Q(\omega,\sqrt{AC})] \in \{1,3\}.$$
    
    Second, we prove that when $\sqrt{AC} \notin \Q(\omega)$, we have \begin{equation} \label{(2)}
    [\Q(\omega,\alpha):\Q(\omega,\sqrt{AC})]=[\Q(\omega,\beta):\Q(\omega,\sqrt{AC})].
    \end{equation}

    If $[\Q(\omega,\alpha):\Q(\omega,\sqrt{AC})]=1$, then $2\sqrt{AC}-B = (a+b\sqrt{AC})^3=(a^3+3ab^2AC) + (3a^2b+b^3AC)\sqrt{AC}$ for some $a,b \in \Q(\omega)$. 
    Since $\sqrt{AC} \notin \Q(\omega)$, we have $a^3+3ab^2AC=-B$ and $3a^2b+b^3AC=2$. Therefore, $(a-b\sqrt{AC})^3=(a^3+3ab^2AC) - (3a^2b+b^3AC)\sqrt{AC} = -2\sqrt{AC}-B = \beta^3$. Hence $\beta=\omega^i (a-b\sqrt{AC}) \in \Q(\omega,\sqrt{AC})$ for some $i \in \{0,1,2\}$. This means that $[\Q(\omega,\beta):\Q(\omega,\sqrt{AC})]=1=[\Q(\omega,\alpha):\Q(\omega,\sqrt{AC})]$.

    Similarly, if $[\Q(\omega,\beta):\Q(\omega,\sqrt{AC})]=1$, we have $[\Q(\omega,\alpha):\Q(\omega,\sqrt{AC})]=1$. If none of $[\Q(\omega,\alpha):\Q(\omega,\sqrt{AC})]$ and $[\Q(\omega,\beta):\Q(\omega,\sqrt{AC})]$ is $1$, then $[\Q(\omega,\alpha):\Q(\omega,\sqrt{AC})]=[\Q(\omega,\beta):\Q(\omega,\sqrt{AC})]=3$.
    Therefore, (\ref{(2)}) holds when $\sqrt{AC} \notin\Q(\omega)$.

    Hence, cases A--F shown in Figure~\ref{cubic-cases} are all possibilities for the extension degrees between $\Q(\omega,\alpha)$, $\Q(\omega,\beta)$, $\Q(\omega,\allowbreak\sqrt{AC})$ and $\Q(\omega)$. On the other hand, cases (i)--(v) shown in Figure~\ref{quadratic-cases} are all possibilities for the extension degrees between $\Q(\omega,\sqrt{A},\sqrt{C})$, $\Q(\omega,\sqrt{A})$, $\Q(\omega,\sqrt{C})$ and $\Q(\omega)$. 
    Now we discuss these cases.\\

\input{Galois-diagrams.tex}

    \begin{figure}
    \centering
    \hspace*{-0.24\textwidth}
    \CubicDiagram{A}{3}{3}{2}
    \CubicDiagram{B}{1}{1}{2}

    \CubicDiagram{C}{3}{3}{1}
    \CubicDiagram{D}{3}{1}{1}
    \CubicDiagram{E}{1}{3}{1}
    \CubicDiagram{F}{1}{1}{1}

    \caption{Cases regarding the cubic extensions.}
    \label{cubic-cases}
    \end{figure}

    \begin{figure}
    \centering
    \QuadraticDiagram{i}{2}{2}{2}{2}
    \QuadraticDiagram{ii}{1}{1}{2}{2}

    \QuadraticDiagram{iii}{1}{2}{2}{1}
    \QuadraticDiagram{iv}{2}{1}{1}{2}
    \QuadraticDiagram{v}{1}{1}{1}{1}

    \caption{Cases regarding the quadratic extensions.}
    \label{quadratic-cases}
    \end{figure}

    We first show that, in case (i) or case (ii), there exists $\sigma \in G$ such that $\begin{cases}
        \sigma(\sqrt{A})=-\sqrt{A}\\
        \sigma(\sqrt{C})=-\sqrt{C}.
    \end{cases}$

    In case (i), $\Q(\omega,\sqrt{A},\sqrt{C})$ has degree $4$ over $\Q(\omega)$, and it is Galois over $\Q(\omega)$ since it is the splitting field of $(x^2-A)(x^2-C)$. Thus, there exists an element in $\Gal(\Q(\omega,\sqrt{A},\sqrt{C})/\Q(\omega))$ that sends both $\sqrt{A}$ and $\sqrt{C}$ to their negatives. Extending this element, we know that there exists $\sigma \in G$ such that $\sigma(\sqrt{A})=-\sqrt{A}$ and $\sigma(\sqrt{C})=-\sqrt{C}$.

    In case (ii), we extend the nontrivial element in $\Gal(\Q(\omega,\sqrt{A})/\Q(\omega))$ to an element $\sigma \in G$. Then $\sigma(\sqrt{A})=-\sqrt{A}$. Let $\sqrt{C}=a+b\sqrt{A}$ for $a,b \in \Q(\omega)$. Then $C=(a^2+b^2A)+2ab\sqrt{A}$. Since $\sqrt{A} \notin \Q(\omega)$, we have $ab=0$. Since $\sqrt{C} \notin \Q(\omega)$, we have $b \neq 0$. Therefore $a=0$ and $\sqrt{C}=b\sqrt{A}$. Hence $\sigma(\sqrt{C})=-\sqrt{C}$. Therefore we get an element $\sigma \in G$ such that $\sigma(\sqrt{A})=-\sqrt{A}$ and $\sigma(\sqrt{C})=-\sqrt{C}$.

    Now we show that the existence of $\sigma \in G$ such that $\sigma(\sqrt{A})=-\sqrt{A}$ and $\sigma(\sqrt{C})=-\sqrt{C}$ implies that Statement~(\ref{rk-1-equiv}) does not hold.
    Suppose that $(c_1,c_2)$ is a pair for (\ref{rk-1-equiv}). 
    Then we have $\sigma(\sqrt{AC})=(-\sqrt{A})(-\sqrt{C})=\sqrt{AC}$, so $\sigma$ fixes both $\alpha^3$ and $\beta^3$. Therefore $\sigma(x_P)=\omega^i x_P$ and $\sigma(x_Q)=\omega^j x_Q$ for some $i,j \in \{0,1,2\}$, where $x_P,x_Q$ denote the $x$-coordinates of $P,Q$. We also have $\sigma(y_P)=-y_P$ and $\sigma(y_Q)=-y_Q$. Therefore, $\sigma(P)=-\omega^i P$ and $\sigma(Q)=-\omega^j Q$. Hence $c_1(P+\omega^iP)+c_2(Q+\omega^jQ)=0$. Since $\mathrm{span}_{\Z[\omega]}\{P\} \cap \mathrm{span}_{\Z[\omega]}\{Q\} = \{0\}$, we have $c_1(1+\omega^i)=c_2(1+\omega^j)=0$ and therefore $c_1=c_2=0$. This proves that in either case (i) or case (ii), Statement~(\ref{rk-1-equiv}) does not hold.\\

    Before discussing the remaining cases (iii), (iv), (v), we first prove a claim. 

    \begin{claim-star} 
        Suppose that $(c_1, c_2)$ is a pair that satisfies Statement (\ref{rk-1-equiv}). If $[\Q(\omega,\alpha):\Q(\omega,\sqrt{AC})]=3$, then $c_1=0$. Similarly, if $[\Q(\omega,\beta):\Q(\omega,\sqrt{AC})]=3$, then $c_2=0$.
    \end{claim-star}
    \begin{proof}
        Suppose that $\alpha$ has degree $3$ over $\Q(\omega,\sqrt{AC})$, then there exists $\sigma\in \Gal(\bQ/\Q(\omega,\sqrt{AC}))$ such that $\sigma(\alpha) \neq \alpha$. Since $\sigma(\alpha)^3 =\sigma(2\sqrt{AC}-B)=2\sqrt{AC}-B=\alpha^3$, we have $\sigma(\alpha)=\omega^i \alpha$ for some $i \in \{1,2\}$. Since $\sigma(\beta)^3=\beta^3$, we have $\sigma(\beta)=\omega^j \beta$ for some $j \in \{0,1,2\}$. 
        Since $\sigma(\sqrt{AC})=\sqrt{AC}$, we have either $\sigma(\sqrt{A})=\sqrt{A}, \sigma(\sqrt{C})=\sqrt{C}$, or $\sigma(\sqrt{A})=-\sqrt{A}, \sigma(\sqrt{C})=-\sqrt{C}$. 
        In either case, we have $\sigma(P)=\zeta_6^k P$ and $\sigma(Q)=\zeta_6^l Q$ for some $k \in \{1,2,4,5\}, l \in \{0, \dots, 5\}$, where $\zeta_6=e^{\frac{2\pi i}{6}}$. 
        Therefore $c_1(P-\zeta_6^kP)+c_2(Q-\zeta_6^l Q)=0$. 
        Since $\mathrm{span}_{\Z[\omega]}\{P\} \cap \mathrm{span}_{\Z[\omega]}\{Q\}= \{ 0\}$, and $k \in \{1,2,4,5\}$, we have $c_1(1-\zeta_6^k) = 0$ and $c_1=0$. 
    \end{proof}

    Now we discuss case (iii) and case (iv).

    When case (iii) happens, cases C,D,E,F do not happen because $\sqrt{C} \in \Q(\omega)$ implies that $[\Q(\omega,\sqrt{AC}):\Q(\omega)]=[\Q(\omega,\sqrt{A}):\Q(\omega)]=2$. Similarly, when (iv) happens, cases C,D,E,F do not happen. 

    Cases A(iii) and A(iv). When case A happens, both $\alpha$ and $\beta$ have degree $3$ over $\Q(\omega,\sqrt{AC})$. By the Claim, we know that Statement (\ref{rk-1-equiv}) does not hold.

    Cases B(iii) and B(iv). We will prove that Statement (\ref{rk-1-equiv}) holds. Since $\alpha \in \Q(\omega,\sqrt{AC})$, let $\alpha=a+b\sqrt{AC}$ where $a,b\in \Q(\omega)$. Since $\sqrt{AC} \notin \Q(\omega)$, by the proof of (\ref{(2)}), we have $ \beta=\omega^i(a-b\sqrt{AC})$ for some $i \in \{0,1,2\}$. 
    Let $Q'=\omega^{-i} Q = \left(\left(a-b\sqrt{AC}\right)t,\ \sqrt{A}t^2-\sqrt{C}t\right)$. 
    Recall that $P= \left(\left(a+b\sqrt{AC}\right)t,\ \sqrt{A}t^2+\sqrt{C}t\right)$. 

    Case B(iii). We prove that $P-Q'$ is fixed by all $\sigma \in G$. 
    Let $\sigma \in G$. 
    If $\sigma$ fixes $\sqrt{A}$, since $\sqrt{C} \in \Q(\omega)$, we have $\sigma(P)=P$ and $\sigma(Q')=Q'$. If $\sigma$ does not fix $\sqrt{A}$, then we have 
    $\sigma(P)=-Q'$ and $\sigma(Q')=-P$. 
    In any case, $P-Q'$ is fixed by $\sigma \in G$. 

    Case B(iv). We prove that $P+Q'$ is fixed by all $\sigma \in G$.
    Let $\sigma \in G$.
    If $\sigma$ fixes $\sqrt{C}$, since $\sqrt{A} \in \Q(\omega)$, we have $\sigma(P)=P$ and $\sigma(Q')=Q'$. If $\sigma$ does not fix $\sqrt{C}$, we have $\sigma(P)=Q'$ and $\sigma(Q')=P$. 
    In any case, $P+Q'$ is fixed by $\sigma \in G$.

    Therefore, Statement (\ref{rk-1-equiv}) holds for the pair $(c_1,c_2)=(1,-\omega^{-i})$ in case B(iii) and for the pair $(c_1,c_2)=(1,\omega^{-i})$ in case B(iv). 
    
    This completes our discussion for case (iii) and (iv). \\

    Now we discuss case (v). Now both case A and case B do not happen. We prove that Statement (\ref{rk-1-equiv}) holds in cases D(v), E(v), F(v), and does not hold in case C(v).

    In cases D(v) and F(v), we have $\beta \in \Q(\omega)$ and $\sqrt{A},\sqrt{C} \in \Q(\omega)$, so $Q \in M^G$ and the pair $(c_1, c_2)=(0,1)$ works for Statement (\ref{rk-1-equiv}). In case E(v), we have $\alpha \in \Q(\omega)$ and $\sqrt{A},\sqrt{C} \in \Q(\omega)$, so $P \in M^G$ and the pair $(c_1, c_2)=(1,0)$ works for Statement (\ref{rk-1-equiv}). 
    
    In Case C(v). Since both $\Q(\omega,\alpha)$ and $\Q(\omega,\beta)$ have degree $3$ over $\Q(\omega,\sqrt{AC})$, by the Claim, we know that $c_1=c_2=0$ and Statement (\ref{rk-1-equiv}) does not hold.\\

    In summary, Statement (\ref{rk-1-equiv}) holds if and only if we are in one of the cases B(iii), B(iv), D(v), E(v), F(v). Note that case F(v) is equivalent to the conditions in part (a). Therefore we complete our proof of part (b).

    (c) This is the only remaining case, so it happens if and only if both (a) and (b) do not happen. This completes the proof of Theorem ~\ref{result-(4,3,2)}.
\end{proof}

Theorem ~\ref{result-(4,3,2)} completes the proof of Subsection ~\ref{subsection-non-degen} and therefore Theorem ~\ref{result-(6,3,0)}.
\end{proof}

\section{Proof of Theorem~\ref{arith-gen-(6,3,0)}} \label{section-arith-gen}

\begin{proof}
    Let $\langle\tau\rangle=\Gal(\Q(\omega)/\Q)$, then $\tau(1+2\omega)=\tau(\sqrt{-3})=-\sqrt{-3}=-(1+2\omega)$.

    (a) This follows from Lemma~\ref{CMT-2.1} and Lemma~\ref{reduction}.

    (b) When $\rk_{\Z}E_1(\Q(t))=1$, by Theorem~\ref{result-(6,3,0)}, one can easily verify that $P \in E_1(\Q(\omega)(t))$. The fact that $P$ is not a torsion point follows either from \cite{Br} Theorem 1.5 or from the calculation of the height pairing in Remark~\ref{alt-pf-(2,1,0)}. 
    Since the cube root lies in $\Q$ and $\sqrt{A}\in \Q(\omega)$, we know that $\tau(P)=\begin{cases}
        P \quad \text{if $\sqrt{A} \in \Q$}\\
        -P \quad \text{otherwise}
    \end{cases}$.
    This proves part (b).

    (c) when $\rk_{\Z}E_2(\Q(t))>0$, one can easily verify that $P \in E_2(\Q(\omega)(t))$. In case of $A=0$, the fact that $P$ is non-torsion follows either from Theorem 1.3 in \cite{Br}, or from Remark~\ref{alt-pr-(3,2)}. In case of $A \neq 0, C=0$, a change of variables in Lemma~\ref{basics}(c) changes this problem to the previous case. In case of $A,C \neq 0, B^2-4AC=0$, the point $P$ was constructed by following Subsection~\ref{subsection-degen} part (iii) and is therefore non-torsion.

    Note that when $B^2-4AC=0$, we have $\sqrt{A} \in \Q \iff \sqrt{C} \in \Q$. Note also that the cube roots lie in $\Q$. Therefore, $\tau(P)=\begin{cases}
        P \quad \text{if $\sqrt{A},\sqrt{C} \in \Q$}\\
        -P \quad \text{otherwise}
    \end{cases}$ in all cases. 
    This proves part (c).

    (e) By the change of variables in Lemma~\ref{basics}(c) and using Theorem~\ref{arith-gen-(6,3,0)} part (b), we get part (e).

    (d)(i) When $\sqrt{A},\sqrt{C} \notin \Q$, the cube roots are real, and $\begin{cases}
        \tau(P)=-P,\\
        \tau(Q)=-Q,
    \end{cases}$ as desired.

    When $\sqrt{A},\sqrt{C} \in \Q$, the cube roots are real, and $\begin{cases}
        \tau(P)=P,\\
        \tau(Q)=Q,
    \end{cases}$ as desired.

    When $\begin{cases}
        \sqrt{A} \notin \Q,\\
        \sqrt{C} \in \Q,
    \end{cases}$ since $B \in \Q$, we know that the two cube roots are not real and are conjugate in $\Q(\omega)/\Q$. 
    Therefore $\begin{cases}
        \tau(P)=-Q,\\
        \tau(Q)=-P,
    \end{cases}$ Hence $\begin{cases}
        \tau(P-Q)=P-Q,\\
        \tau(P+Q)=-(P+Q),
    \end{cases}$ as desired.

    When $\begin{cases}
        \sqrt{C} \notin \Q,\\
        \sqrt{A} \in \Q,
    \end{cases}$ the two cube roots are non-real and conjugates. Now we have $\begin{cases}
        \tau(P)=Q,\\
        \tau(Q)=P,
    \end{cases}$ so $\begin{cases}
        \tau(P+Q)=P+Q,\\
        \tau(P-Q)=-(P-Q),
    \end{cases}$ as desired.

    (d)(ii) As we have taken the two cube roots to be conjugate in $\Q(\omega,\sqrt{AC})/\Q(\omega)$, which is possible by Theorem~\ref{result-(4,3,2)} case B(iii),B(iv), let 
    $$P=\Big( \big(\alpha+\beta\sqrt{AC}\big)\cdot t, \sqrt{A}t^2+\sqrt{C}t \Big), \quad Q=\Big( \big(\alpha-\beta\sqrt{AC}\big)\cdot t, \sqrt{A}t^2-\sqrt{C}t \Big)$$
    for some $\alpha,\beta \in \Q(\omega)$ and let $\langle \sigma \rangle = \Gal(\Q(\omega,\sqrt{AC})/\Q(\omega))$.
    Therefore, \\
    when $\begin{cases}
        \sqrt{A} \notin \Q(\omega)\\
        \sqrt{C} \in \Q(\omega)
    \end{cases}$we have $\begin{cases}
        \sigma(P)=-Q\\
        \sigma(Q)=-P
    \end{cases}$; and when $\begin{cases}
        \sqrt{A} \in \Q(\omega)\\
        \sqrt{C} \notin \Q(\omega)
    \end{cases}$we have $\begin{cases}
        \sigma(P)=Q\\
        \sigma(Q)=P.
    \end{cases}$
    Hence the $R$ we constructed is a generator for $E_2(\Q(\omega)(t))$ as a $\Z[\omega]$-module up to finite index.

    When $\tau(R)=-R$, we know that $\{(1+2\omega)R\}$ is a basis (up to finite index) for the group $E_2(\Q(t))$. 
    
    When $\tau(R) \neq -R$, since $\tau^2=id$, we know that $R+\tau(R) \in E_2(\Q(t))\backslash\{0\}$. 
    Since $A,C,B^2-4AC \neq 0$ in part (d) of Theorem~\ref{arith-gen-(6,3,0)}, we know that 
    the $\Z[\omega]$-module $E_2(\bQ(t))$ is a free-module by Lemma~\ref{lem-geo-rk-2}. Hence $R+\tau(R)$ is not a torsion element in the $\Z[\omega]$-module and therefore generates the rank-one group $E_2(\Q(t))$ up to finite index.

    (d)(iii) Recall that $R$ is the one of $P,Q$ that lies in $E_2(\Q(\omega)(t))$, which is non-torsion.

    When $\tau(R)=-R$, we know that $\{(1+2\omega)R\}$ is a basis (up to finite index) for the group $E_2(\Q(t))$.

    When $\tau(R) \neq -R$, 
    by the same proof as in part (d)(ii), we know that $\{R+\tau(R)\}$ is a basis for $E_2(\Q(t))$ up to finite index.

    This completes the proof of Theorem~\ref{arith-gen-(6,3,0)}.
\end{proof}

\section{Largest rank, examples, and further remarks} \label{section-remarks}

In this section, we find the largest possible rank of $E_0(\Q(t))$, give examples to achieve this rank in different ways, and give an alternative proof for the parts in \cite{Br} that we have used.

\begin{prop}
    We have $\rk_{\Z}E_0(\Q(t))\leq 3$.
\end{prop}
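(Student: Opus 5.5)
The plan is to bound each summand in Theorem~\ref{result-(6,3,0)}(a) and then rule out the one configuration in which all the individual bounds are attained at once. By parts (b) and (d), $\rk_{\Z}E_1(\Q(t))\le 1$ and $\rk_{\Z}E_3(\Q(t))\le 1$. By part (c), $\rk_{\Z}E_2(\Q(t))\le 1$ in the three degenerate cases and $\le 2$ in the non-degenerate case. So $\rk_{\Z}E_0(\Q(t))\le 4$, and rank $4$ can occur only if three things hold simultaneously: $A,C,B^2-4AC\neq 0$ and $\rk_{\Z}E_2(\Q(t))=2$; $\rk_{\Z}E_1(\Q(t))=1$; and $\rk_{\Z}E_3(\Q(t))=1$. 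I will assume all three and derive a contradiction.

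First I translate the hypotheses. From Theorem~\ref{result-(6,3,0)}(b),(d), both $\frac{B^2-4AC}{4A}$ and $\frac{B^2-4AC}{4C}$ are nonzero cubes. Multiplying them gives $\frac{(B^2-4AC)^2}{16AC}$, so $16AC$ is a nonzero cube in $\Q$. From part (c) in the rank-$2$ case, $\sqrt A,\sqrt C\in\Q(\omega)$, so $s:=\sqrt A\sqrt C\in\Q(\omega)\setminus\{0\}$. Moreover $\alpha=\sqrt[3]{2s-B}$ and $\beta=\sqrt[3]{-2s-B}$ both lie in $\Q(\omega)$, and both are nonzero because $(2s-B)(-2s-B)=B^2-4AC\neq 0$.

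Next I show that $4s$ is a cube in $\Q(\omega)$. We have $(4s)^2=16AC=c^3$ for some $c\in\Q^\times$. Then $4s=(4s)^3/(4s)^2=(4s/c)^3$, so $4s=\delta^3$ with $\delta=4s/c\in\Q(\omega)^\times$.

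Finally, $\alpha^3-\beta^3=(2s-B)-(-2s-B)=4s=\delta^3$, so
$$\alpha^3+(-\beta)^3=\delta^3,\qquad \alpha\beta\delta\neq 0,\quad \alpha,\beta,\delta\in\Q(\omega).$$
This is a nontrivial solution of the Fermat cubic $X^3+Y^3=Z^3$ over $\Q(\omega)$. Such solutions do not exist: this is the classical theorem of Euler and Gauss, proved by descent in $\Z[\omega]$ using the prime $1-\omega$. Equivalently, the elliptic curve $X^3+Y^3=Z^3$ has only the trivial points, those with $XYZ=0$, over $\Q(\sqrt{-3})$. This contradiction gives $\rk_{\Z}E_0(\Q(t))\le 3$.

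The main obstacle is the reduction step: spotting that the conditions from $E_1$ and $E_3$ combine to make $16AC$ a cube, and hence make $4\sqrt{AC}=\alpha^3-\beta^3$ a cube. Once that is in place, the rest is quoting Fermat's Last Theorem for exponent $3$ over $\Q(\omega)$. I would cite that result rather than reprove it.
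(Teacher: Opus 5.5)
Your route is the paper's: force all three summands to be maximal, show that $4\sqrt{A}\sqrt{C}=\alpha^3-\beta^3$ is a cube in $\Q(\omega)$, and apply Fermat for exponent $3$ over $\Q(\sqrt{-3})$. However, one step fails as written. From ``multiplying them gives $\frac{(B^2-4AC)^2}{16AC}$'' you conclude ``so $16AC$ is a nonzero cube''. That product only shows $16AC$ equals a cube times $(B^2-4AC)^2$, and nothing you have used at that point makes $(B^2-4AC)^2$ a cube. In fact the $E_1$ and $E_3$ conditions alone do not force $16AC$ to be a cube. Take $A=C=1$, $B=6$: then $\sqrt{A}=\sqrt{C}=1$ and $\frac{B^2-4AC}{4A}=\frac{B^2-4AC}{4C}=8=2^3$, so $\rk_{\Z}E_1(\Q(t))=\rk_{\Z}E_3(\Q(t))=1$, yet $16AC=16$ is not a cube. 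So the key observation in your last paragraph, that the conditions from $E_1$ and $E_3$ combine to make $16AC$ a cube, is false.

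The missing input is the rank-$2$ condition on $E_2$, which you already have on the page. Since $(\alpha\beta)^3=(2s-B)(-2s-B)=B^2-4AC$, the number $B^2-4AC$ is a nonzero cube in $\Q(\omega)$. Write $\frac{B^2-4AC}{4A}=u^3$ and $\frac{B^2-4AC}{4C}=v^3$. Then $16AC=\bigl((\alpha\beta)^2/(uv)\bigr)^3$ with $(\alpha\beta)^2/(uv)\in\Q(\omega)^\times$. Equivalently, $4A$ and $4C$ are cubes in $\Q(\omega)$, which is exactly how the paper gets there. Your $c$ then lies in $\Q(\omega)^\times$, and $\delta=4s/c\in\Q(\omega)^\times$ still works. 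With this repair your argument coincides with the paper's. Your identity $4s=(4s/c)^3$ is just a tidier form of the paper's step that $\theta^3,\theta^2\in\Q(\omega)$ implies $\theta\in\Q(\omega)$.
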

\begin{proof}
    Suppose that $\rk_{\Z}E_0(\Q(t)) \geq 4$, then we have $\sqrt{A},\sqrt{C}\in \Q(\omega)$, and all of $B+2\sqrt{A}\sqrt{C}$, $B-2\sqrt{A}\sqrt{C}$, $4A$ and $4C$ are cubes in $\Q(\omega)$. So we have $16AC$ is a cube in $\Q(\omega)$. To see that $4\sqrt{A}\sqrt{C}$ is a cube in $\Q(\omega)$, let $\theta=\sqrt[3]{4\sqrt{A}\sqrt{C}}$, then $\theta^3=4\sqrt{A}\sqrt{C} \in \Q(\omega)$, and $\theta^2=\sqrt[3]{16AC} \in \Q(\omega)$, so $\theta \in \Q(\omega)$.
    
    Now since $B+2\sqrt{A}\sqrt{C}$, $B-2\sqrt{A}\sqrt{C}$ and $4\sqrt{A}\sqrt{C}$ are all cubes in $\Q(\omega)$, let them be $\alpha^3$, $\beta^3$ and $\gamma^3$, where $\alpha,\beta,\gamma\in \Q(\omega)=\Q(\sqrt{-3})$. Then we have $\alpha^3-\beta^3=\gamma^3$. 
    However, a classical result (see \cite{IR} Proposition 17.8.1) says that
    this is impossible unless at least one of $\alpha,\beta,\gamma$ is zero, which implies either $AC=0$ or $B^2-4AC=0$ and never gives $\rk_{\Z}E_2(\Q(t))=2$. Therefore, it is not possible to achieve $\rk_{\Z}E_0(\Q(t))=4$.
\end{proof}

Now we give some examples to achieve $\rk_{\Z}E_0(\Q(t))=3$ in different ways.

\begin{example}
    Now we give an example when $\rk_{\Z}E_0(\Q(t))=3$ is achieved from  $\rk_{\Z}E_1(\Q(t))=\rk_{\Z}E_2(\Q(t))=\rk_{\Z}E_3(\Q(t))=1$.
    
    Let $n \geq 2$ be an integer and
    $$A=C=(4n^3-4)^2, \quad B=(4n^3-2)A,$$
    we have $\sqrt{A}=\sqrt{C}\in \Q\backslash\{0\}$ and $B^2-4AC \neq 0$. We also have $-(2\sqrt{A}\sqrt{C}-B) = B-2A=(4n^3-4)^3$ is a cube in $\Q(\omega)\backslash\{0\}$, and $-(-2\sqrt{A}\sqrt{C}-B)=B+2A=4n^3(4n^3-4)^2=(4n)^3 (n^3-1)^2$ is not a cube in $\Q(\omega)$ for $n \geq 2$, so $\rk_{\Z}E_2(\Q(t))=1$.
    
    We also have $\frac{B^2-4AC}{4C}=\frac{B^2-4AC}{4A} = (B-2A) \cdot \frac{B+2A}{4A}=(4n^3-4)^3 \cdot n^3$ is a cube in $\Q(\omega)$, and recall that $\sqrt{A}=\sqrt{C}\in \Q$, so $\rk_{\Z}E_1(\Q(t))=\rk_{\Z}E_3(\Q(t))=1$. \qed 
\end{example}

\begin{example}
    Now we give an example when $\rk_{\Z}E_0(\Q(t))=3$ is achieved from $\rk_{\Z} E_1(\Q(t))=1$, $\rk_{\Z}E_2(\Q(t))=2$ and $\rk_{\Z} E_3(\Q(t))=0$ (or the symmetric counter-part). Here we give an example when $\sqrt[3]{2\sqrt{A}\sqrt{C}-B}, \allowbreak \sqrt[3]{-2\sqrt{A}\sqrt{C}-B} \in \Q(\omega)\backslash\Q$.
    
    Let  
    $$A=16, \quad B=280, \quad C=-972,$$ we have $\sqrt{A}=4 \in \Q(\omega)$, $\sqrt{C}=18\sqrt{-3} \in \Q(\omega)$. Now we have $\sqrt{AC}=72\sqrt{-3}$ and $\sqrt[3]{\pm 2\sqrt{A}\sqrt{C}-B}
    =\sqrt[3]{-280 \pm 144 \sqrt{-3}}
    =2(1\mp 2\sqrt{-3})$. Therefore, $\rk_{\Z}E_2(\Q(t))=2$.

    Since $\sqrt[3]{\pm 2\sqrt{A}\sqrt{C}-B} \in \Q(\omega)\backslash\{0\}$, their product $\sqrt[3]{B^2-4AC}$ lies in $\Q(\omega)\backslash\{0\}$. Since $4A=64=4^3$, we have $\rk_{\Z}E_1(\Q(t))=1$. Since $4C=-3888=-2^4 \cdot 3^5$ is not a cube in $\Q(\omega)$, we have $\rk_{\Z}E_3(\Q(t))=0$. \qed
\end{example}

Now we give an alternative proof for the results in \cite{Br} that we have used.

\begin{remark} \label{alt-pf-(2,1,0)}
    Recall that $E_1: y^2=x^3+(At^2+Bt+C)$. When $A=0$ or $B^2-4AC=0$, computing the singular fibres and using the Shioda-Tate formula (\cite{SS} Section 7.2) gives $\rk_{\Z} E_1(\bQ(t))=0$. 
    Now we suppose that $A \neq 0$ and $B^2-4AC \neq 0$. 
    Then the only singular fibre not of type II is at $t=\infty$, which has type $IV^*$, and Shioda-Tate's formula gives us $\rk_{\Z[\omega]}E_1(\bQ(t))=1$. 
    We observe that 
    $P=\left( \sqrt[3]{\frac{B^2-4AC}{4A}}, \sqrt{A}t +\frac{B}{2\sqrt{A}}   \right)$ is a point on $E_1(\bQ(t))$. On the $s$-chart, the surface is  $E_1: y^2=x^3+(Cs^6+Bs^5+As^4)$ and the section is $P=\left( \sqrt[3]{\frac{B^2-4AC}{4A}} s^2, \sqrt{A}s^2 +\frac{B}{2\sqrt{A}}s^3   \right)$. We have $(P \cdot O)=0$ and the section $P$ after the blow-ups intersects a non-identity component because $P$ intersects the singularity at $s=0$ before the blow-ups. Therefore $\langle P,P \rangle=2+0-\frac{4}{3}=\frac{2}{3}$. So this $P$ is not a torsion point, and Lemma~\ref{rank-criteria} (or \cite{Kl} Lemma 2.8) gives an alternative proof of Proposition ~\ref{result-(2,1,0)}. 
\end{remark}

\begin{remark} \label{alt-pr-(3,2)}
    For $E: y^2=x^3+(At^3+Bt^2)$ where $A \neq 0$, we observe that 
    $P=(-\sqrt[3]{A}t, \sqrt{B}t)$ is a point in $E(\bQ(t))$. 
    When $B=0$, we know by the Shioda-Tate formula (\cite{SS} Section 7.2) that $\rk_{\Z}E(\bQ(t))=0$. Now we suppose (additionally) that $B \neq 0$, then Shioda-Tate's formula gives us $\rk_{\Z[\omega]} E(\bQ(t))=1$. 
    The singular fibre at $t=0$ is of type IV, at $t=\infty$ is of type $I_0^*$, at $t=-\frac{B}{A}$ is of type II, and there are no other singular fibres.
    In the $s$-chart, the surface is $E: y^2=x^3+(Bs^4+As^3)$, the section is $P=(-\sqrt[3]{A}s, \sqrt{B}s^2)$. Therefore, we have $(P \cdot O)=0$, and $P$ intersects a non-identity component at both $t=0$ and $t=\infty$ after blow-ups because $P$ intersects the singularities of those fibres before the blow-ups. Therefore $\langle P,P \rangle=2+0-(\frac{2}{3}+1)=\frac{1}{3}$, and $P$ is not a torsion point. By Lemma~\ref{rank-criteria} (or \cite{Kl} Lemma 2.8), we get an alternative proof of Proposition ~\ref{result-(3,2)}.
    
\end{remark}

\end{document}

%% file: Galois-diagrams.tex
\newcommand{\CubicDiagram}[5][0.24]{\begin{subfigure}{#1\textwidth}
\centering
\begin{tikzcd}[row sep=0.5cm, column sep=-1cm, ampersand replacement=\&]
    \Q(\omega,\alpha) \arrow[dr, dash, "#3"']
    \& \& \Q(\omega,\beta) \arrow[dl, dash, "#4"]\\
    \& \Q(\omega,\sqrt{AC}) \arrow[d, dash, "#5", end anchor=north] \& \\
    \& \Q(\omega) \&
\end{tikzcd}
\caption*{Case #2}
\end{subfigure}}

\newcommand{\QuadraticDiagram}[6][0.3]{\begin{subfigure}{#1\textwidth}
\centering
\begin{tikzcd}[row sep=0.5cm, column sep=-1.3cm, ampersand replacement=\&]
    \& \Q(\omega,\sqrt{A},\sqrt{C})
      \arrow[dl, dash, "#3"', end anchor=north]
      \arrow[dr, dash, "#4", end anchor=north]
    \& \\
    \Q(\omega,\sqrt{A}) \& \& \Q(\omega,\sqrt{C})\\
    \& \Q(\omega)
      \arrow[ul, dash, "#5", end anchor=south]
      \arrow[ur, dash, "#6"', end anchor=south]
    \&
\end{tikzcd}
\caption*{Case (#2)}
\end{subfigure}}